\documentclass{amsart}
\usepackage{amsfonts,amssymb,amscd,amsmath,enumerate,verbatim,calc}
\usepackage[all]{xy}

\newcommand{\CM}{Cohen-Macaulay}

\newcommand{\wrt}{with respect to}

\newcommand{\I}{\mathbb{I} }
\newcommand{\n}{\mathfrak{n} }
\newcommand{\m}{\mathfrak{m} }

\newcommand{\q}{\mathfrak{q} }
\newcommand{\R}{\mathcal{R} }
\newcommand{\C}{\mathcal{C} }
\newcommand{\Z}{\mathbb{Z} }
\newcommand{\X}{\mathcal{X} }
\newcommand{\rt}{\rightarrow}

\newcommand{\ov}{\overline}

\newcommand{\wh}{\widehat }

\newcommand{\Om}{\Omega}
\newcommand{\image}{\operatorname{image}}

\newcommand{\projdim}{\operatorname{projdim}}

\newcommand{\depth}{\operatorname{depth}}
\newcommand{\Tor}{\operatorname{Tor}}

\newcommand{\ann}{\operatorname{ann}}

\newcommand{\Supp}{\operatorname{\underline{Supp}}}
\newcommand{\Spec}{\operatorname{Spec}}
\newcommand{\CMS}{\operatorname{\underline{CM}}}

\newcommand{\Hom}{\operatorname{Hom}}
\newcommand{\sHom}{\operatorname{\underline{Hom}}}
\newcommand{\Ext}{\operatorname{Ext}}

\theoremstyle{plain}

\newtheorem{theorem}{Theorem}[section]
\newtheorem{corollary}[theorem]{Corollary}
\newtheorem{lemma}[theorem]{Lemma}
\newtheorem{proposition}[theorem]{Proposition}

\theoremstyle{definition}

\newtheorem{example}[theorem]{Example}

\theoremstyle{remark}

\begin{document}

\title[Hilbert polynomials]{Derived functors and Hilbert polynomials over hypersurface rings-II }
\author{Tony~J.~Puthenpurakal}
\date{\today}
\address{Department of Mathematics, IIT Bombay, Powai, Mumbai 400 076}

\email{tputhen@math.iitb.ac.in}
\subjclass{Primary  13D09, 13A30 ; Secondary 13H10 }
\keywords{functions of polynomial type, derived functors, hypersurface rings, stable category of MCM modules}

 \begin{abstract}
Let $(A,\m)$ be a hypersurface local ring of dimension $d \geq 1$, $N$ a perfect $A$-module and let $I$ be an  ideal in $A$ with $\ell(N/IN)$ finite. We show that there is a integer $r_I \geq -1$ (depending only on $I$ and $N$) such that if $M$ is any non-free maximal \CM \ (=  MCM)
$A$-module the  functions $n \rt \ell(\Tor^A_1(M, N/I^{n+1}N))$, $n \rt \ell(\Ext^1_A(M, N/I^{n+1}N))$ and $n \rt \ell(\Ext^{d+1}(N/I^{n+1}N, M))$ (which are all of polynomial type) has degree $r_I$.
Surprisingly a  key ingredient is the classification of thick subcategories of the  stable category of MCM $A$-modules (obtained by Takahashi, see \cite[6.6]{T}).
\end{abstract}
 \maketitle
\section{introduction}
Dear Reader, while reading this paper it is a good idea to have part 1 of this paper, see \cite{P5}.

In this paper we prove  few surprising  results.  Recall $A$ is said to be a hypersurface ring if its completion $\wh{A} = Q/(f)$ where $(Q, \n)$ is a regular local ring and $f \in \n^2$ is non-zero. We set the degree of the zero polynomial to be $-1$. In part one of this paper we proved
\begin{theorem}\label{p1}(see \cite[1.1, 1.2, 1.4]{P5}). Let  $(A,\m)$ be a hypersurface local ring of dimension $d \geq 1$ and let $I$ be an $\m$-primary ideal. Then there is an  integer $r_I \geq -1$ (depending only on $I$) such that if $M$ is any non-free  MCM
$A$-module then the functions $n \rt \ell(\Tor^A_1(M, A/I^{n+1}))$, $n \rt \ell(\Ext^1_A(M, A/I^{n+1}))$ are of degree
$r_I$.
\end{theorem}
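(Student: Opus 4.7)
The strategy combines three ingredients: Eisenbud's $2$-periodic matrix factorization resolutions for MCM modules over the hypersurface $\wh A = Q/(f)$, standard Rees-algebra Hilbert machinery, and Takahashi's classification of thick subcategories of $\CMS(A)$.

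First I would verify that $n \mapsto \ell(\Tor_1^A(M, A/I^{n+1}))$ and $n \mapsto \ell(\Ext^1_A(M, A/I^{n+1}))$ are of polynomial type of degree $\leq d$ for every finitely generated $A$-module $M$. From a finite presentation $F_1 \xrightarrow{\phi} F_0 \to M \to 0$, one identifies $\Tor_1^A(M, A/I^{n+1})$ with $(\ker\phi \cap I^{n+1} F_1)/I^{n+1}\ker\phi$; by Artin--Rees this is the $(n{+}1)$-st graded component of a finitely generated module over the Rees algebra $\R(I) = \bigoplus_n I^n$, so Hilbert's theorem yields polynomial growth. The $\Ext^1$ case is parallel.

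Next, for a non-free MCM $M$ set $r_{\Tor}(M) = \deg \ell(\Tor_1^A(M, A/I^{n+1}))$, $r_{\Ext}(M) = \deg \ell(\Ext_A^1(M, A/I^{n+1}))$, and $r(M) = \max(r_{\Tor}(M), r_{\Ext}(M))$. For each integer $s \geq -1$ set
\[
\mathcal{T}_s \;=\; \{0\} \cup \{M \in \CMS(A) : r(M) \leq s\}.
\]
I would show $\mathcal{T}_s$ is a thick subcategory of the triangulated stable category $\CMS(A)$. Closure under direct summands is additivity of $\Tor$ and $\Ext$. For closure under the suspension $\Om^{-1}$, Eisenbud's $2$-periodicity reduces all higher $\Tor^A_i(M,-)$ and $\Ext_A^i(M,-)$ for $i \geq 1$ to either the first or second; the change-of-rings long exact sequence for $A = Q/(f)$, together with $\operatorname{pd}_Q M \leq 1$ (since $M$ is MCM over $A$), pins down $\ell(\Tor_2^A(M, A/I^{n+1}))$ in terms of $\ell(\Tor_1^A(M, A/I^{n+1}))$ modulo lower-order terms, so the two have equal polynomial degree. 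Closure under triangles follows from the long exact $\Tor$- and $\Ext$-sequences, giving $r(Y) \leq \max(r(X), r(Z))$ for any $0 \to X \to Y \to Z \to 0$. The same change-of-rings comparison gives $r_{\Tor}(M) = r_{\Ext}(M)$, as both are governed by the matrix factorization data.

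Finally I invoke \cite[6.6]{T}: thick subcategories of $\CMS(A)$ correspond bijectively to specialization-closed subsets of the singular locus $\operatorname{Sing}(A)$. The ascending chain $\{\mathcal{T}_s\}$ exhausts $\CMS(A)$ (as $r(M) \leq d$ always), so there is a smallest $r_I$ with $\mathcal{T}_{r_I} = \CMS(A)$; this is the desired $r_I$. The critical and most delicate step, and where I expect the main obstacle of the proof to lie, is ruling out the existence of a non-free MCM $M$ with $r(M) < r_I$, i.e. showing $\mathcal{T}_{r_I - 1} = \{0\}$. One must argue that the specialization-closed subset $V \subseteq \operatorname{Sing}(A)$ corresponding to any proper nonzero $\mathcal{T}_s$ is incompatible with the $I$-adic test family $\{A/I^{n+1}\}_n$: because $I$ is $\m$-primary, the growth rate of $\ell(\Tor_1^A(M, A/I^{n+1}))$ is controlled by a single global invariant of $M$ in $\CMS(A)$ rather than by local contributions from individual primes of $\operatorname{Sing}(A)$, so the corresponding $V$ collapses either to $\emptyset$ or to $\operatorname{Sing}(A)$. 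This is precisely where Takahashi's classification enters in the form stated and is, I believe, the heart of the argument.
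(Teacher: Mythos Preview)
This statement is quoted from \cite{P5} and is not reproved in the present paper; the comparison below is with the proofs of its generalizations in Sections~4, 7, and~9, which specialize to Theorem~\ref{p1} upon taking $N=A$.

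Your framework---building a filtration of thick subcategories by degree and invoking Takahashi---is the right philosophy, but the paper's execution differs at the decisive point and your version has a genuine gap there. The paper does \emph{not} run the thick-subcategory argument in $\CMS(A)$. It runs it in $\CMS^0(A)$, the MCM modules free on the punctured spectrum, because Takahashi's result \cite[6.6]{T} says that $\CMS^0(A)$ has \emph{no} proper thick subcategories whatsoever. There the weak triangle function $\xi(M)=\lim_n (r!/n^r)\,t_{I,N}(M\oplus\Omega^1(M),n)$ has $\ker\xi$ thick (Lemma~\ref{ker-lemma}), hence either $0$ or all of $\CMS^0(A)$; since some module realizes the maximal degree $r$, the kernel is $0$ and the degree is constant on $\CMS^0(A)$. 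The passage from $\CMS^0(A)$ to all of $\CMS(A)$ is a \emph{separate} argument (Section~7): an induction on $\dim\Supp(M)$ in which one first shows (Lemma~\ref{ann}) that some $\m^l$ kills every $\Tor^A_i(M,N/I^nN)$, then picks $x\in\m^l$ off the minimal primes of $\Supp(M)$, forms the triangle $M\xrightarrow{x}M\to U$, and obtains $2\,t_{I,N}(M,-)=t_{I,N}(U,-)$ with $\dim\Supp(U)<\dim\Supp(M)$.

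Your attempt to collapse these two steps into a single appeal to the classification of thick subcategories of $\CMS(A)$ founders exactly where you say it does. When $A$ is not an isolated singularity, $\CMS(A)$ has many intermediate thick subcategories---one for every specialization-closed subset of $\operatorname{Sing}(A)$---and nothing you have written excludes $\mathcal{T}_{r_I-1}$ from being one of them. Your heuristic that $\m$-primariness of $I$ forces the degree to be a ``single global invariant'' rather than a prime-by-prime quantity is not a proof; indeed, every nonzero thick subcategory of $\CMS(A)$ already contains all of $\CMS^0(A)$, so what you actually need is that the maximal degree is attained on $\CMS^0(A)$, and that is precisely what the annihilator-plus-induction machinery of Section~7 supplies. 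Separately, your assertion that $r_{\Tor}(M)=r_{\Ext}(M)$ drops out of a change-of-rings comparison is too quick: the paper proves $r^0_{I,N}=s^0_{I,N}$ in Section~9 by an induction on $\dim N$ using superficial elements and the vanishing equivalences of Proposition~\ref{hwm}, not by matrix-factorization bookkeeping.
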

We also proved (see \cite[1.3]{P5}) that  there exists $e_I \geq -1$ (depending only on $I$)  such that if $M$ is any MCM $A$-module free on $\Spec^0(A) = \Spec(A) \setminus \{\m \}$, the function $n \rt \ell(\Ext^{d+1}(A/I^{n+1}, M))$
is of degree $e_I$.

\s The motivation of this paper is to extend the results in part one of this paper.
The first question we address is whether  if $M$ is any MCM $A$-module then is  the function $n \rt \ell(\Ext^{d+1}(A/I^{n+1}, M))$
of degree $e_I$? Furthermore is $r_I = e_I$? In this paper we give affirmative answers to both of these questions.

Next we look at the polynomial type functions  $n \rt \ell(\Tor^A_1(M, N/I^{n+1}N))$, $n \rt \ell(\Ext^1_A(M, N/I^{n+1}N))$ and $n \rt \ell(\Ext^{d+1}(N/I^{n+1}N, M))$ (where either $M$ is free on $\Spec^0(A)$ or if $\ell(N/IN)$ is finite) and ask analogous questions.

\s Let $M$ be a MCM $A$-module, $N$ a finitely generated $A$-module and let $I$ be an ideal in $A$.
Set for $n \geq 0$
\begin{align*}
  t_{I, N}(M,n) &=  \ell(\Tor^A_1(M, N/I^{n+1}N)) \\
  s_{I, N}(M,n) &= \ell(\Ext^1_A(M, N/I^{n+1}N)) \\
  e_{I, N}(M, n) &= \ell(\Ext^{d+1}_A( N/I^{n+1}N, M)).
\end{align*}
\emph{whenever} the corresponding lengths are finite. There  are two general cases when these lengths are finite.

1. The lengths are finite if $M$ is free on $\Spec^0(A)$.

2. The lengths are finite if $\ell(N/IN)$ is finite.

\s We first consider the case when $M$ is free on $\Spec^0(A)$. Let $\CMS(A)$ denote the stable category of MCM $A$-modules and let $\CMS^0(A)$ be the thick subcategory of MCM $A$-modules which are free on $\Spec^0(A)$. Let $\Omega^1(M)$ denote the first syzygy of $M$. We prove
\begin{theorem}\label{m1}
Let $(A,\m)$ be a hypersurface ring of dimension $d$. Let $N$ be a finitely generated $A$-module and let $I$ be an ideal in $A$.
 There exists integers $r_{I, N}^0$ and $s_{I, N}^0$ (both $\geq -1$) depending only on $I$ and $N$ such that if $M \in \CMS^0(A)$ is  non-free then
$\deg t_{I,N}(M \oplus \Omega^1(M), -) = r_{I, N}^0$ and $\deg s_{I,N}(M \oplus \Omega^1(M), -) = s_{I, N}^0$.
\end{theorem}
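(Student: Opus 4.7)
My plan is to adapt the strategy of Part~1 \cite{P5}, whose key structural input is Takahashi's classification of thick subcategories of $\CMS(A)$ \cite[6.6]{T}. The first step is to verify that for $M \in \CMS^0(A)$ and any finitely generated $N$, the functions $t_{I,N}(M,-)$ and $s_{I,N}(M,-)$ are of polynomial type; the freeness of $M$ on $\Spec^0(A)$ makes the relevant Tor and Ext of finite length (their supports being contained in $\{\m\}$), and the polynomial behavior follows from a Hilbert-function argument on the associated bigraded modules over the Rees algebra $\R(I)$, as already exploited in Part~1. The other ingredient from hypersurface theory is 2-periodicity of syzygies: $\Omega^2 M \cong M$ in $\CMS(A)$ for every MCM module $M$, so $M \oplus \Omega^1 M$ is isomorphic to its own syzygy in $\CMS(A)$ and the invariant
$$\delta(M) := \deg t_{I,N}(M \oplus \Omega^1 M, -)$$
is preserved under the shift. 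This is the point of bundling $M$ with $\Omega^1 M$: a subcategory defined by a bound on $\deg t_{I,N}(M,-)$ alone is not shift-stable because $\Tor^A_1(\Omega^1 M, -) \cong \Tor^A_2(M,-)$, and combining the two summands absorbs this discrepancy.

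Set
$$r_{I,N}^0 := \min\{\delta(M) : M \in \CMS^0(A),\ M \text{ non-free}\} \geq -1,$$
which exists since the set of values is a nonempty subset of $\Z_{\geq -1}$, and define $\C := \{M \in \CMS^0(A) : \delta(M) \leq r_{I,N}^0\}$. I would then show $\C$ is a thick subcategory of $\CMS^0(A)$. Closure under direct summands follows from additivity of Tor combined with the minimality of $r_{I,N}^0$; closure under the shift is the 2-periodicity remark; and closure under distinguished triangles is obtained by lifting a triangle $X \to Y \to Z \to \Sigma X$ in $\CMS(A)$ to a short exact sequence of MCM modules, applying $\Tor^A_1(-, N/I^{n+1}N)$ to obtain $\ell(\Tor^A_1(Y, N/I^{n+1}N)) \leq \ell(\Tor^A_1(X, N/I^{n+1}N)) + \ell(\Tor^A_1(Z, N/I^{n+1}N))$, and repeating for the syzygy triangle, so that $\delta(Y) \leq \max(\delta(X), \delta(Z))$.

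By Takahashi's theorem, thick subcategories of $\CMS(A)$ are in bijection with specialization-closed subsets of the singular locus of $\Spec(A)$, and $\CMS^0(A)$ corresponds to $\{\m\}$, so the only thick subcategories of $\CMS^0(A)$ are $\{0\}$ and $\CMS^0(A)$. Since $\C$ contains a non-free module by construction, $\C = \CMS^0(A)$, which forces $\delta(M) = r_{I,N}^0$ for every non-free $M \in \CMS^0(A)$. The parallel argument using the $\Ext^1_A(-, N/I^{n+1}N)$ long exact sequence yields $s_{I,N}^0$ and the corresponding equality for $\deg s_{I,N}(M \oplus \Omega^1 M, -)$. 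The main obstacle I anticipate is verifying the three closure properties of $\C$, especially extension-closure, where one must combine the bound from the Tor long exact sequence with the 2-periodicity to conclude a bound on $\delta$ itself rather than on the individual degrees of the underlying Tor and syzygy-Tor functions.
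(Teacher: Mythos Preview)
Your proposal is correct and follows essentially the same route as the paper: establish the triangle inequality $t_{I,N}(Y\oplus\Omega Y,n)\leq t_{I,N}(X\oplus\Omega X,n)+t_{I,N}(Z\oplus\Omega Z,n)$ by lifting a triangle to a short exact sequence (and its syzygy), then invoke Takahashi's result that $\CMS^0(A)$ has no proper nonzero thick subcategories. The only cosmetic difference is that the paper sets $r=\max_M \delta(M)$ (using Theodorescu's bound $\delta(M)\leq\dim N-1$ to know the max exists) and shows the kernel of the leading-coefficient function $\xi(M)=\lim_n (r!/n^r)\,t_{I,N}(M\oplus\Omega M,n)$ is a thick subcategory forced to be zero, whereas you take $r=\min_M \delta(M)$ and show the sublevel set $\{\delta\leq r\}$ is thick and hence all of $\CMS^0(A)$; these are dual formulations of the same argument.
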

We give an example which shows that $\deg t_{I,N}(M, -) $ need not be constant on $\CMS^0(A)$, see \ref{lucho}. Before analyzing the case of $e_{I, N}(M, -)$ we first need to show that it is of polynomial type.
We prove:
\begin{lemma}
\label{e-growth} Let $(A,\m)$ be a Gorenstein local ring of dimension $d$ and let $M$ be a MCM $A$-module free on $\Spec^0(A)$. Let $N$ be a finitely generated $A$-module and let $I$ be an ideal in $A$. Fix $j \geq d +1$. Then the function $n \mapsto \ell(\Ext^{j}_A( N/I^{n+1}N, M))$ is of polynomial type and of degree $\leq d$.
\end{lemma}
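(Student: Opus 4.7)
The plan is to identify $n \mapsto \ell(\Ext^j_A(N/I^{n+1}N, M))$, up to a correction that is bounded and stabilises for $n \gg 0$, with the Hilbert function of a finitely generated graded module over the Rees algebra $\mathcal{R} = A[It]$, and then to invoke the classical Hilbert polynomial theorem to deduce polynomial-type behaviour of degree at most $\dim \mathcal{R} - 1 = d$. Throughout, I use that since $M$ is MCM and free on $\Spec^0(A)$, $\Ext^i_A(X, M)$ has finite length for every finitely generated $A$-module $X$ and every $i \geq 1$ (as localisation at any $\mathfrak{p} \neq \m$ makes $M$ free and $\Ext^i$ vanish). The case $d = 0$ is immediate since then $I$ is nilpotent, so I may assume $d \geq 1$, whence $j - 1 \geq d \geq 1$.

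First, I would apply $\Ext^*_A(-, M)$ to the short exact sequence $0 \to I^{n+1}N \to N \to N/I^{n+1}N \to 0$ and analyse the resulting long exact sequence. Using that $\Ext^{j-1}_A(N,M)$ and $\Ext^j_A(N,M)$ are fixed modules of finite length, one obtains
\begin{equation*}
\bigl| \ell(\Ext^j_A(N/I^{n+1}N, M)) - \ell(\Ext^{j-1}_A(I^{n+1}N, M)) \bigr| \leq \ell(\Ext^{j-1}_A(N, M)) + \ell(\Ext^j_A(N, M)).
\end{equation*}
Since the relevant kernels inside the fixed finite-length modules $\Ext^{*}_A(N, M)$ form ascending chains in $n$, they stabilise, so the correction on the left-hand side is eventually constant. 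It therefore suffices to prove that $n \mapsto \ell(\Ext^{j-1}_A(I^{n+1}N, M))$ is of polynomial type of degree $\leq d$.

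For this, I would observe that $\widetilde{N} := \bigoplus_{n \geq 0} I^{n+1}N$ is a finitely generated graded $\mathcal{R}$-module, and argue that
\begin{equation*}
E := \bigoplus_{n \geq 0} \Ext^{j-1}_A(I^{n+1}N, M)
\end{equation*}
carries a natural graded $\mathcal{R}$-module structure (stemming from the $\mathcal{R}$-action on $\widetilde{N}$ together with the functoriality of $\Ext$) and is itself finitely generated over $\mathcal{R}$. To establish this, I would take a graded $\mathcal{R}$-free resolution of $\widetilde{N}$, apply $\Hom_A(-, M)$ degree-by-degree, and invoke Kodiyalam--Theodorescu-type finite-generation theorems for $\Ext$ of finitely generated graded modules over Noetherian graded rings.

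Once $E$ is a finitely generated graded $\mathcal{R}$-module whose homogeneous components are of finite length (by the first paragraph), the Hilbert polynomial theorem for $\mathbb{N}$-graded Noetherian rings yields that $\ell(E_n)$ is polynomial in $n$ of degree at most $\dim \mathcal{R} - 1 = d$ for $n \gg 0$. Combined with the stabilised-correction estimate above, this proves the lemma. The hard part will be the finite-generation of $E$ over $\mathcal{R}$: the natural $\mathcal{R}$-action on $\Ext$ has an inherently contravariant character (so the direction of the grading is opposite to that of $\widetilde{N}$), and one must reindex or dualise carefully to bring the situation inside the scope of the standard finite-generation machinery.
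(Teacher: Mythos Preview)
Your reduction via the long exact sequence, together with the stabilisation of the correction terms, is fine. The genuine gap is exactly where you flag it: the finite generation of $E=\bigoplus_{n}\Ext^{j-1}_A(I^{n+1}N,M)$ over $\mathcal R=A[It]$ is asserted but not proved, and your sketched route does not work. The $\mathcal R$-action coming from the $\mathcal R$-structure on $\widetilde N$ sends $I^nN$ to $I^{n+1}N$, hence contravariantly induces $\Ext^{j-1}_A(I^{n+1}N,M)\to\Ext^{j-1}_A(I^{n}N,M)$; this is degree-\emph{decreasing}, so $E$ is not a positively graded $\mathcal R$-module to which the Hilbert polynomial theorem applies. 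Your suggestion to take a graded free $\mathcal R$-resolution of $\widetilde N$ and apply $\Hom_A(-,M)$ degree-by-degree runs into the same wall: on a free summand $\mathcal R(-a)$ it produces $\bigoplus_n\Hom_A(I^{n-a},M)$, which is typically not a finitely generated $\mathcal R$-module, and cohomology of such a complex need not be Noetherian. The Kodiyalam--Theodorescu results you invoke are all covariant in the filtered variable; there is no off-the-shelf contravariant analogue, and saying one must ``reindex or dualise carefully'' is naming the problem, not solving it.

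The paper bypasses this entirely with a different idea. Since $M$ is MCM and free on $\Spec^0(A)$, its minimal injective resolution $\mathcal C$ satisfies $\mathcal C^i=E(k)^{r_i}$ for all $i\geq d$. Hence for $j\geq d+1$ one may replace $\mathcal C$ by $\mathcal D=\Gamma_\m(\mathcal C)$ without changing $H^j(\Hom(N/I^{n+1}N,-))$, then use Hom--tensor adjunction and Matlis duality (Theodorescu's lemma $\Hom(A/I,\mathcal C)^\vee\cong\mathcal C^\vee\otimes A/I$) to rewrite
\[
\ell\bigl(\Ext^j_A(N/I^{n+1}N,M)\bigr)=\ell\bigl(H_{-j}(\Hom_A(N,\mathcal D)^\vee\otimes_A A/I^{n+1})\bigr).
\]
The point is that $\Hom_A(N,\mathcal D)^\vee$ is, in the relevant degrees, a complex of \emph{finitely generated} $\widehat A$-modules (each term is $(N^\vee)^{r_i\,\vee}\cong\widehat N^{\,r_i}$), so Theodorescu's theorem on $H_1(\mathcal C\otimes N/I^nN)$ applies directly and gives polynomial growth of degree at most $d$. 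The dualisation you sensed was needed happens at the level of the injective resolution of $M$, not at the level of the Rees module $\widetilde N$; this is what converts the contravariant $\Ext(-,M)$ into a covariant tensor computation.
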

Then we show
\begin{theorem}\label{m2}
Let $(A,\m)$ be a hypersurface ring of dimension $d$. Let $N$ be a finitely generated $A$-module and let $I$ be an ideal in $A$.
 There exists integer $e_{I, N}^0$ ($\geq -1$) depending only on $I$ and $N$ such that if $M \in \CMS^0(A)$ is a non-free then
$\deg e_{I,N}(M \oplus \Omega^1(M), -) = e_{I, N}^0$.
\end{theorem}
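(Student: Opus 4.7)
The plan is to run the thick-subcategory argument of Theorem \ref{m1}, with $\Ext^{d+1}_A$ replacing $\Tor^A_1$ and $\Ext^1_A$, and to invoke Takahashi's classification of thick subcategories. For each integer $r \geq -1$ set
\[
\C_r \;=\; \{\, M \in \CMS^0(A) \,:\, \deg e_{I,N}(M \oplus \Omega^1(M),\,-) \leq r \,\}.
\]
By Lemma \ref{e-growth}, $\C_d = \CMS^0(A)$. The substance of the proof is to verify that each $\C_r$, for $r$ beyond a universal threshold depending only on $I$ and $N$, is a thick subcategory of $\CMS^0(A)$. Once that is done, Takahashi's theorem \cite[6.6]{T} settles the matter: every non-free object of $\CMS^0(A)$ has non-free locus $\{\m\}$, so the only thick subcategories of $\CMS^0(A)$ are $\{0\}$ and $\CMS^0(A)$ itself. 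The invariant $e_{I,N}^0$ is the smallest $r$ at which $\C_r$ fills out the whole category; it then depends only on $I$ and $N$.

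Closure of $\C_r$ under direct sums and direct summands is immediate from the additivity of $\Ext$. The content is closure under triangles. A triangle $M' \to M \to M''$ in $\CMS(A)$ lifts to a short exact sequence $0 \to M' \to M \oplus F \to M'' \to 0$ of MCM modules. With $X_n = N/I^{n+1}N$, using the Gorenstein vanishing $\Ext^i_A(X_n, A) = 0$ for $i \neq d$, the Ext long exact sequence yields
\[
\ell\bigl(\Ext^{d+1}_A(X_n, M')\bigr) \;\leq\; \ell\bigl(\Ext^d_A(X_n, M'')\bigr) + \ell\bigl(\Ext^{d+1}_A(X_n, M)\bigr).
\]
The stray $\Ext^d$-term is converted into the right form through the syzygy sequence $0 \to \Omega^1 M'' \to G \to M'' \to 0$, which yields
\[
\ell\bigl(\Ext^d_A(X_n, M'')\bigr) \;\leq\; \ell\bigl(\Ext^{d+1}_A(X_n, \Omega^1 M'')\bigr) + \mathrm{rk}(G)\cdot \ell\bigl(\Ext^d_A(X_n, A)\bigr),
\]
and $n \mapsto \ell(\Ext^d_A(X_n, A))$ is a polynomial of degree at most $d$ independent of the triangle. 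Running the same estimate on the shifted triangle $\Omega^1 M' \to \Omega^1 M \to \Omega^1 M''$ and using the hypersurface periodicity $\Omega^2 M \cong M$ in $\CMS(A)$, one arrives at
\[
e_{I,N}(M' \oplus \Omega^1 M', n) \;\leq\; e_{I,N}(M \oplus \Omega^1 M, n) + e_{I,N}(M'' \oplus \Omega^1 M'', n) + C\cdot \ell\bigl(\Ext^d_A(X_n, A)\bigr)
\]
for some integer constant $C$ depending only on ranks. The two remaining rotations of the triangle closure are analogous, relying on $\Ext^{d+2}_A(X_n, M') \cong \Ext^{d+1}_A(X_n, \Omega^1 M')$ (again from Gorenstein vanishing). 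Hence $\C_r$ is thick for every $r$ above $\deg \ell(\Ext^d_A(X_n, A))$.

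The hardest step, and precisely the reason the direct summand $\Omega^1(M)$ is forced into the statement, is this conversion of a stray $\Ext^d$-term into an $\Ext^{d+1}$-term of an $\Omega$-shift: without it, the triangle inequality would fail to close up on the variable defining $\C_r$. Once thickness of $\C_r$ is in place for all large $r$, Takahashi's theorem pins each such $\C_r$ down to $\{0\}$ or $\CMS^0(A)$, so the increasing family $\{\C_r\}_r$ admits a unique jump at an index $e_{I,N}^0$ depending only on $I$ and $N$. Every non-free $M \in \CMS^0(A)$ therefore satisfies $\deg e_{I,N}(M \oplus \Omega^1(M),-) = e_{I,N}^0$, which is the claim.
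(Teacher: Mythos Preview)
Your argument has a genuine gap at the threshold step. You only establish that $\C_r$ is thick for $r$ at or above the growth rate of $n \mapsto \ell(\Ext^d_A(X_n, A))$. Takahashi's theorem then forces each such $\C_r$ to be $\{0\}$ or $\CMS^0(A)$, but says nothing about $\C_r$ for smaller $r$. If the true invariant $e_{I,N}^0$ lies strictly below your threshold, you cannot rule out that $\C_{e_{I,N}^0 - 1}$ is a proper nonzero subset, i.e.\ that different non-free $M$ realize different degrees; the ``unique jump'' conclusion is therefore unjustified. (There is also a secondary issue: the paper notes, in the example immediately after Lemma~\ref{e-growth}, that $n \mapsto \ell(\Ext^d_A(X_n,A))$ need not even be of polynomial type, so your phrase ``$\deg \ell(\Ext^d_A(X_n,A))$'' is not well-defined in general.)

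The paper avoids the stray $\Ext^d$-term entirely by assembling the triangle and its shift into a single exact sequence \emph{before} applying the functor. From a triangle $M \to U \to V \to \Omega^{-1}(M)$ one has $0 \to U \to V \oplus F \to \Omega^{-1}(M) \to 0$ and, taking syzygies, $0 \to \Omega(U) \to \Omega(V) \oplus G \to M \to 0$. Their direct sum is
\[
0 \to U \oplus \Omega(U) \to V \oplus \Omega(V) \oplus F \oplus G \to M \oplus \Omega(M) \to 0,
\]
with the free summand now in the \emph{middle}. Applying $\Ext^{d+1}_A(X_n,-)$ and using only $\Ext^{d+1}_A(X_n,\text{free}) = 0$ gives
\[
e_{I,N}(V \oplus \Omega V, n) \;\leq\; e_{I,N}(U \oplus \Omega U, n) + e_{I,N}(M \oplus \Omega M, n)
\]
with no correction term at all. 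Thus $\xi(M) = \lim_n (r!/n^r)\, e_{I,N}(M\oplus\Omega M, n)$, with $r$ the maximal degree over $\CMS^0(A)$, is a genuine weak triangle function, $\ker\xi$ is thick, and Takahashi's theorem applies directly at the correct level. Your route went astray by bounding $\Ext^{d+1}(X_n,M')$ from the \emph{left} of the long exact sequence, which drags in $\Ext^d$; bounding the third vertex from the right (via $\Ext^{d+2}(X_n,M') \cong \Ext^{d+1}(X_n,\Omega M')$), or equivalently summing the two sequences first, is what makes the argument close up.
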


\s We now consider the case when $\ell(N/IN)$ is finite. In this case we can prove that $t_{I, N}(M, n) = t_{I, N}(\Omega^1(M), n)$ for all $n \geq 0$. Similarly $s_{I, N}(M, n) = s_{I, N}(\Omega^1(M), n)$ and
$e_{I, N}(M, n) = e_{I, N}(\Omega^1(M), n)$ for all $n \geq 0$; see \ref{basic}(2). We may ask if $\deg t_{I, N}(M, -) = r_{I, N}^0$ for all non-free MCM $A$-modules. In this generality the result does not hold. We show:
\begin{example}\label{ex}(see Section 5).
Let $(A, \m)$ be a hypersurface ring of dimension $d \geq 1$ and \emph{NOT} an isolated singularity. Let $P$ be a prime ideal in $A$ with $t = \dim A/P > 0$ and $A_P$ NOT regular local.
Set $N = A/P$, $I = \m$ and $M = \Omega^{d}(A/P) \oplus \Omega^{d+1}(A/P)$. Then $\deg t_{\m, N}(M, -) = t$ while we can show $r_{\m, N}^0 \leq t -1$.
\end{example}
Next we consider the problem of finding  classes of modules $N$ where \\ $\deg t_{I, N}(M, n) = r_{I, N}^0$ for all non-free MCM $A$-modules.  We solve the problem in two steps.
Let
$$\X(N)  = \{ M \mid M \in \CMS(A) \ \text{and} \ \ell(\Tor^A_i(M, N)) < \infty \ \text{for all}\ i \geq 1 \}.$$
It is easy to see that $\X(N)$ is a thick subcategory of $\CMS(A)$ containing $\CMS^0(A)$. First we show
\begin{theorem}\label{xi}
(with hypotheses as in \ref{m1}). Assume $\ell(N/IN)$ is finite. Let $M \in \X(N)$ be non-free. Then $\deg t_{I, N}(M,-) = r_{I, N}^0$ and $\deg s_{I, N}(M,-) = s_{I, N}^0$
\end{theorem}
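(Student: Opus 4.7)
The plan is to lift Theorem \ref{m1} from $\CMS^0(A)$ to the larger thick subcategory $\X(N)$ by combining additivity of $t_{I,N}$ and $s_{I,N}$ along short exact sequences in $\CMS(A)$, the $2$-periodicity of Tor and Ext over a hypersurface, and Takahashi's classification \cite[6.6]{T} of thick subcategories of $\CMS(A)$.

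As a first reduction, the hypothesis $\ell(N/IN) < \infty$ makes available the syzygy invariance $t_{I,N}(M, n) = t_{I,N}(\Omega^1 M, n)$ from \ref{basic}(2), so proving $\deg t_{I,N}(M, -) = r_{I,N}^0$ for a non-free $M \in \X(N)$ is the same as proving $\deg t_{I,N}(M \oplus \Omega^1 M, -) = r_{I,N}^0$ --- exactly the shape handled by Theorem \ref{m1}. The $s_{I,N}$ statement is parallel, so I focus on Tor; the Ext version simply replaces the long exact sequence of $\Tor$ with that of $\Ext$ throughout. One also checks that $\X(N)$ really is a thick subcategory of $\CMS(A)$ containing $\CMS^0(A)$, so that Takahashi's classification applies.

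The main engine is an additivity lemma. For any short exact sequence $0 \to L \to M \to K \to 0$ of MCM modules lying in $\X(N)$, the long exact sequence of $\Tor_\bullet^A(-, N/I^{n+1}N)$ gives $\deg t_{I,N}(M, -) \leq \max(\deg t_{I,N}(L, -), \deg t_{I,N}(K, -))$, together with analogous bounds for the two other permutations via the $2$-periodicity of $\Tor$ over the hypersurface. Thus $\deg t_{I,N}$ is a triangulated numerical invariant on $\X(N) \setminus \{\text{free}\}$. By Takahashi's classification, every non-free $M \in \X(N)$ can be linked to a non-free $M_0 \in \CMS^0(A)$ by a finite zig-zag of such short exact sequences inside $\X(N)$; propagating the value $r_{I,N}^0$ (supplied by Theorem \ref{m1} applied to $M_0 \oplus \Omega^1 M_0$) along this zig-zag then yields the theorem.

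The main obstacle is promoting the additivity from an upper bound to the asserted equality: the long exact sequences only give $\leq$. To rule out a degree drop for $M$, one argues that any such drop would, via the permuted forms of the inequality in the same zig-zag, force a degree drop for some $M_0 \in \CMS^0(A)$, contradicting Theorem \ref{m1}. This symmetry step is the place where $2$-periodicity of $\Tor$ over a hypersurface is essential, since it is what allows the additivity inequality to be read in all three directions and hence transferred both ways along the zig-zag.
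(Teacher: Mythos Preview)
Your argument establishes only one of the two inequalities. The three rotated triangle inequalities (via $2$-periodicity and \ref{basic}(2)) say that in any triangle $A \to B \to C$ in $\X(N)$ the maximum of the three degrees is attained at least twice. This indeed rules out a \emph{drop}: the set $\{X \in \X(N) : \deg t_{I,N}(X,-) < r_{I,N}^0\}$ is thick, so if it contained a non-free $M$ it would contain $\langle M\rangle \supseteq \CMS^0(A)$ by Takahashi, contradicting Theorem~\ref{m1}. But the zig-zag is one-directional: Takahashi gives $M_0 \in \langle M\rangle$, not $M \in \langle M_0\rangle = \CMS^0(A)$. Nothing in your scheme excludes $\deg t_{I,N}(M,-) > r_{I,N}^0$; in a triangle with vertices of degrees $r_{I,N}^0,\, c,\, c$ where $c>r_{I,N}^0$, all three inequalities hold. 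Example~\ref{ex} shows the upper bound genuinely requires the hypothesis $M\in\X(N)$ in a way the bare triangle inequality does not exploit.

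The paper obtains the missing upper bound by a different device. Lemma~\ref{ann} (which uses $M\in\X(N)$ essentially) produces $l$ with $\m^l$ annihilating $L_i(M)=\bigoplus_n\Tor^A_i(M,N/I^nN)$ for all $i$. Choosing $x\in\m^l$ off the minimal primes of $\Supp(M)$ and forming the triangle $M\xrightarrow{x}M\to U$, the vanishing of multiplication by $x$ on $L_i(M)$ yields a short exact sequence $0\to L_3(M)\to L_3(U)\to L_2(M)\to 0$ and hence the \emph{exact} identity $2\,t_{I,N}(M,-)=t_{I,N}(U,-)$, not merely an inequality. Since $\dim\Supp(U)<\dim\Supp(M)$, induction on $\dim\Supp(M)$ pins down the degree from both sides simultaneously.
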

So our problem reduces to find conditions on $N$ which ensure $\X(N) = \CMS(A)$. In this regard we show:
\begin{proposition}\label{XNmax}
Let $(A, \m)$ be a hypersurface ring and let $N$ be a finitely generated $A$-module. The following assertions are equivalent:
\begin{enumerate}[\rm (1)]
  \item $\X(N) = \CMS(A)$.
  \item $\projdim_{A_P} N_P$ is finite for all primes $P \neq \m$.
\end{enumerate}
\end{proposition}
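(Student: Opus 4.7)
The plan is to prove each implication by localizing at a prime $P \neq \m$ and exploiting the identity $\Tor^A_i(M,N)_P = \Tor^{A_P}_i(M_P, N_P)$; finite length of $\Tor^A_i(M,N)$ is then equivalent to the vanishing of every such localization for $P \neq \m$.

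For $(2) \Rightarrow (1)$, I would fix an MCM $A$-module $M$ and a prime $P \neq \m$. Since $A$ is Cohen--Macaulay, $M_P$ is MCM over $A_P$. If $A_P$ is regular then $M_P$ is free and $\Tor^{A_P}_i(M_P, N_P) = 0$ for $i \geq 1$. Otherwise $A_P$ is a hypersurface, and by Eisenbud's matrix factorization theorem the minimal $A_P$-free resolution of $M_P$ is periodic of period $2$ starting from its first term, so the sequence $i \mapsto \Tor^{A_P}_i(M_P, N_P)$ is periodic for $i \geq 1$. Hypothesis (2) gives $\projdim_{A_P} N_P < \infty$, so this Tor vanishes for all sufficiently large $i$, and periodicity then forces the vanishing at every $i \geq 1$. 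Hence $\Tor^A_i(M, N)$ is supported only at $\m$, of finite length, and $M \in \X(N)$.

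For $(1) \Rightarrow (2)$, I would argue contrapositively. Suppose $\projdim_{A_P} N_P = \infty$ for some $P \neq \m$; this already forces $A_P$ to be non-regular. Set $M := \Omega^d_A(A/P)$, which is MCM because $d$-th syzygies over a $d$-dimensional Cohen--Macaulay ring are MCM. Localizing a free resolution of $A/P$ and using $(A/P)_P = k(P)$, one sees that $M_P$ agrees with $\Omega^d_{A_P}(k(P))$ up to a free summand, whence for $i \geq 1$,
\[
\Tor^{A_P}_i(M_P, N_P) \;\cong\; \Tor^{A_P}_{i+d}(k(P), N_P).
\]
Because $\projdim_{A_P} N_P = \infty$, every Betti number of $N_P$ over the local ring $A_P$ is strictly positive, so the right-hand side is nonzero for every $i \geq 1$. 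Consequently $\Tor^A_i(M, N)$ is not of finite length, hence $M \notin \X(N)$, contradicting (1).

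The only technical wrinkle is the periodicity step used in $(2) \Rightarrow (1)$: the paper imposes the hypersurface condition on $\wh A = Q/(f)$ rather than on $A$ itself, so $A_P$ need not literally be a quotient of a regular local ring by a non-zero-divisor. I would handle this by passing to completion throughout, using that finite length of $\Tor^A_i(M, N)$ is equivalent to finite length of $\Tor^{\wh A}_i(\wh M, \wh N)$, and that every localization of $\wh A = Q/(f)$ at a non-maximal prime is literally of the form $Q'/(f)$ for a regular local ring $Q'$, so the matrix factorization machinery applies directly.
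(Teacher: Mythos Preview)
Your argument is correct, but both implications are handled differently from the paper, and the comparison is instructive.

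For $(2)\Rightarrow(1)$, the paper simply invokes Proposition~\ref{pdimf}: over any Gorenstein local ring, an MCM module $M$ satisfies $\Tor^A_i(M,N)=\Tor^A_{i+d+1}(\Omega^{-d-1}M,N)=0$ whenever $\projdim N<\infty$. Applied to $A_P$ (which is Gorenstein, no further hypothesis needed), this immediately gives the vanishing you want. This bypasses the wrinkle you raise about whether $A_P$ is literally a hypersurface: the cosyzygy shift only needs Gorenstein-ness, so no case split and no passage to completion are required. Your periodicity argument is fine too---one can simply localize the $2$-periodic resolution of $M$ over $A$ to get a (possibly non-minimal) $2$-periodic resolution of $M_P$, which suffices for the Tor periodicity---but the paper's route is cleaner.

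For $(1)\Rightarrow(2)$, the paper chooses a different test module: it sets $M=\Omega^d(N)$ rather than your $\Omega^d(A/P)$, observes that the hypothesis $\X(N)=\CMS(A)$ forces $\Tor^{A_P}_i(M_P,N_P)=0$ for $i\geq 1$, and then invokes the Huneke--Wiegand/Miller rigidity theorem (\cite[1.9]{HW}, \cite[1.1]{M}) to conclude that $M_P$ is free or $\projdim_{A_P}N_P<\infty$ or one of them vanishes---each alternative contradicting $\projdim_{A_P}N_P=\infty$. Your choice $M=\Omega^d(A/P)$ is more elementary: it reduces directly to the nonvanishing of Betti numbers of $N_P$ and avoids the rigidity theorem altogether. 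The paper's choice is natural given that Huneke--Wiegand is already used repeatedly elsewhere (e.g., in \ref{ext-x} and Section~5), but yours is self-contained.
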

As a consequence we obtain
\begin{corollary}
  \label{m1-gen}
Let $(A,\m)$ be a hypersurface ring of dimension $d$. Let $N$ be a finitely generated $A$-module and let $I$ be an ideal in $A$ with $\ell(N/IN)$ finite. Assume $\projdim_{A_P} N_P$ is finite for all primes $P \neq \m$. Then
 there exists integers $r_{I, N}^0$ and $s_{I, N}^0$ (both $\geq -1$) depending only on $I$ and $N$ such that if $M \in \CMS(A)$ is  non-free then
$\deg t_{I,N}(M, -) = r_{I, N}^0$ and $\deg s_{I,N}(M, -) = s_{I, N}^0$.
\end{corollary}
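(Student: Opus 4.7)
The plan is to obtain the corollary as an immediate consequence of combining Theorem \ref{xi} with Proposition \ref{XNmax}; no genuinely new argument is required, and the role of the corollary is simply to repackage the two results into the form most useful for applications.

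First I would apply Proposition \ref{XNmax}. The hypothesis of the corollary, namely that $\projdim_{A_P} N_P$ is finite for every prime $P \neq \m$, is exactly condition~(2) of that proposition. The implication $(2)\Rightarrow(1)$ yields $\X(N) = \CMS(A)$; in particular every MCM $A$-module $M$ lies in $\X(N)$, so $\ell(\Tor^A_i(M,N)) < \infty$ for all $i \geq 1$.

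Next I would feed this into Theorem \ref{xi}. Since $\ell(N/IN)$ is finite by assumption and every non-free $M \in \CMS(A)$ now belongs to $\X(N)$, Theorem \ref{xi} applies directly to give $\deg t_{I,N}(M,-) = r_{I,N}^0$ and $\deg s_{I,N}(M,-) = s_{I,N}^0$. The two integers $r_{I,N}^0$ and $s_{I,N}^0$ are those already furnished by Theorem \ref{m1}, and they depend only on $I$ and $N$.

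I do not anticipate any real obstacle here: the substantive content has already been absorbed into Theorem \ref{xi} (which rests on Takahashi's classification of thick subcategories of $\CMS(A)$) and into Proposition \ref{XNmax} (which identifies when $\X(N)$ exhausts $\CMS(A)$). The hypothesis on the local projective dimensions $\projdim_{A_P} N_P$ is calibrated precisely so as to trigger the equivalent conditions of Proposition \ref{XNmax}, at which point Theorem \ref{xi} upgrades automatically from $\X(N)$ to all of $\CMS(A)$.
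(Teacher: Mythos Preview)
Your proposal is correct and matches the paper's approach exactly: the paper presents Corollary~\ref{m1-gen} as an immediate consequence of Proposition~\ref{XNmax} (which converts the hypothesis on local projective dimensions into $\X(N)=\CMS(A)$) together with Theorem~\ref{xi}. No further argument is given or needed.
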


The function $ e_{I,N}(M, -) $ behaves well under some more restrictive hypotheses on $N$.
We show
\begin{theorem}\label{m2-gen}
Let $(A,\m)$ be a hypersurface ring of dimension $d$. Let $N$ be a perfect $A$-module and let $I$ be an ideal in $A$ with $\ell(N/IN)$ finite. Then
 there exists integer $e_{I, N}^0$  ($\geq -1$) depending only on $I$ and $N$ such that if $M \in \CMS(A)$ is  non-free then
$\deg e_{I,N}(M, -) = e_{I, N}^0$.
\end{theorem}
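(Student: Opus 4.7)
The plan is to combine Proposition~\ref{XNmax} with a thick-subcategory argument in $\CMS(A)$ that mirrors the proof of Theorem~\ref{xi}. Since $N$ is perfect, $\projdim_A N < \infty$, so in particular $\projdim_{A_P} N_P < \infty$ for every prime $P \neq \m$. Proposition~\ref{XNmax} therefore gives $\X(N) = \CMS(A)$, and it suffices to prove the following $e$-analogue of Theorem~\ref{xi}: for every non-free $M \in \X(N)$, the function $e_{I,N}(M,-)$ is of polynomial type (which follows by adapting Lemma~\ref{e-growth} to the weaker $\X(N)$-hypothesis) and its degree equals a single constant $e_{I,N}^0 \geq -1$ depending only on $I$ and $N$.

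For the degree claim, I would set $e_{I,N}^0 = \min\{\deg e_{I,N}(M,-) : M \in \X(N) \text{ non-free}\}$ and consider
$$\C \;=\; \{0\} \cup \{M \in \X(N) : M \text{ non-free and } \deg e_{I,N}(M,-) \leq e_{I,N}^0\}.$$
I would then verify that $\C$ is a thick subcategory of $\CMS(A)$. Direct-summand closure is immediate from the additivity of $e_{I,N}$. Closure under the syzygy $\Omega^1$ is the identity $e_{I,N}(M,n) = e_{I,N}(\Omega^1 M, n)$ of~\ref{basic}(2), which is valid because $\ell(N/IN)$ is finite. Closure under triangles $M_1 \to M_2 \to M_3 \to \Omega^{-1} M_1$ comes from the long exact sequence of $\Ext^{d+1}(N/I^{n+1}N,-)$ combined with the identification $\ell(\Ext^{d+2}(N/I^{n+1}N, M)) = e_{I,N}(M,n)$; this identification uses the 2-periodicity of Ext over a hypersurface (Eisenbud's matrix factorization theorem) together with the $\Omega^1$-invariance above.

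Takahashi's classification~\cite[6.6]{T} then identifies $\C$ with the thick subcategory associated to a specialisation-closed subset $W \subseteq \operatorname{Sing}(A)$. Since the minimum defining $e_{I,N}^0$ is attained, $\C$ is non-trivial, so $\m \in W$; following the strategy of~\cite[1.1]{P5} one enlarges a minimal-degree member of $\C$ by iterated syzygies and direct sums with carefully chosen MCM modules (without ever raising $\deg e_{I,N}$ past $e_{I,N}^0$) until the combined support covers all of $\operatorname{Sing}(A)$. This forces $W = \operatorname{Sing}(A)$, hence $\C = \CMS(A)$, and therefore $\deg e_{I,N}(M,-) = e_{I,N}^0$ for every non-free MCM $A$-module. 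The principal obstacle is this last step: exhibiting, inside $\C$, a module whose singular support is all of $\operatorname{Sing}(A)$ without raising the degree. This is the same hypersurface-specific manoeuvre that had to be carried out in part one for the functions $t_I$ and $s_I$, and adapting it to $e_{I,N}$ should be routine once the closure properties of $\C$ are in place.
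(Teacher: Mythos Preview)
Your proposal has two genuine gaps.

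First, the polynomial-type claim for arbitrary MCM $M$ is not addressed by ``adapting Lemma~\ref{e-growth}''. The proof of Lemma~\ref{e-growth} hinges on $M$ being free on $\Spec^0(A)$: only then does $\Gamma_\m$ of the minimal injective resolution of $M$ consist of finite powers of $E(k)$ in degrees $\geq d$, so that the Matlis dual complex is a complex of finitely generated $\wh{A}$-modules. For general $M$ this fails. The paper instead proves (Theorem~\ref{dual}) that $V(M)=\bigoplus_n \Ext^{d+1}_A(N/I^{n+1}N,M)^\vee$ is a finitely generated $\R(I)$-module, by induction on $\dim N$ using an $N$-superficial element; polynomial growth and a uniform annihilator $\m^s U^i(M)=0$ (Corollary~\ref{ann-e}) are corollaries. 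This is where the hypothesis that $N$ is perfect does its essential work, and your outline bypasses it.

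Second, and more seriously, the last step of your argument is circular. To enlarge the singular support of a minimum-degree module by taking a direct sum with some MCM module $M'$ whose support hits a new point of $\operatorname{Sing}(A)$, you must already know that $\deg e_{I,N}(M',-)\leq e_{I,N}^0$; but that is exactly the assertion under proof. Thick subcategories of $\CMS(A)$ are classified by specialisation-closed subsets of $\operatorname{Sing}(A)$, so a nonzero thick $\C$ certainly need not be all of $\CMS(A)$. The paper avoids this trap by running the reduction in the \emph{opposite} direction: given any non-free $M$, it uses the annihilator bound from Corollary~\ref{ann-e}(1) to choose $x\in\m^l$ outside the minimal primes of $\Supp(M)$, forms the triangle $M\xrightarrow{x}M\to W\to\Omega^{-1}M$, and shows from the resulting short exact sequence that $2\,e_{I,N}(M,-)=e_{I,N}(W,-)$ with $\dim\Supp(W)<\dim\Supp(M)$. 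Iterating lands in $\CMS^0(A)$, where Theorem~\ref{m2} applies. The crucial ingredient your sketch is missing is precisely this annihilator control, which is what lets one pass from an arbitrary $M$ down to $\CMS^0(A)$ without changing the degree.
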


In part one of this paper we had proved $ r_{I, A}^0 = s_{I, A}^0$ when $I$ is an $\m$-primary ideal in $A$ and $N = A$.
in this paper we extend and generalize this result. We prove that
\begin{theorem}\label{m3-gen}
Let $(A,\m)$ be a hypersurface ring of dimension $d$. Let $N$ be a perfect $A$-module and let $I$ be an ideal in $A$ with $\ell(N/IN)$ finite.  Then
 $$ r_{I, N}^0 = s_{I, N}^0 = e_{I, N}^0. $$
\end{theorem}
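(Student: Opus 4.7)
The plan is to establish $r_{I,N}^0 = e_{I,N}^0$ and $r_{I,N}^0 = s_{I,N}^0$ separately. For each, the strategy is to identify $e_{I,N}(M,-)$ or $s_{I,N}(M,-)$ with a $t_{I,N}$-function evaluated on a naturally-associated non-free MCM partner of $M$, and then invoke Theorems \ref{m1-gen} and \ref{m2-gen}, which guarantee that the three degrees are constant across non-free MCM modules.

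For $e_{I,N}^0 = r_{I,N}^0$, I would apply Gorenstein local duality to the derived tensor $N/I^{n+1}N \otimes^L_A M^*$, whose cohomology has finite length. Combined with $R\Hom_A(M^*, A) \simeq M$ (since $M^*$ is MCM over the Gorenstein ring $A$) and tensor-Hom adjunction, this produces the isomorphism
\[
\Ext^{d+1}_A(N/I^{n+1}N, M) \;\cong\; \Tor^A_1(M^*, N/I^{n+1}N)^{\vee},
\]
and Matlis duality being length-preserving gives $e_{I,N}(M, n) = t_{I,N}(M^*, n)$. Since $M^*$ is non-free MCM whenever $M$ is, the cited theorems immediately deliver $e_{I,N}^0 = r_{I,N}^0$.

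For $r_{I,N}^0 = s_{I,N}^0$, I would work with a matrix factorization $(\phi, \psi)$ of $f$ with $M = \operatorname{coker}(\phi)$, so $\Om M = \operatorname{coker}(\psi)$ and the dualization $(\Om M)^* \cong \Om(M^*)$ carries the matrix factorization $(\psi^*, \phi^*)$. Tensoring the resolution $\cdots F \xrightarrow{\psi} F \xrightarrow{\phi} F \to M \to 0$ with $N/I^{n+1}N$, and applying $\Hom_A(-, N/I^{n+1}N)$ to the dualized resolution $\cdots F^* \xrightarrow{\phi^*} F^* \xrightarrow{\psi^*} F^* \to (\Om M)^* \to 0$, produce after the canonical identification $\Hom_A(F^*, -) \cong F \otimes_A -$ (valid since $F$ is finite free) the same kernel-mod-image expression inside $F \otimes_A (N/I^{n+1}N)$. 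This yields
\[
\Tor^A_1(M, N/I^{n+1}N) \;\cong\; \Ext^1_A\bigl((\Om M)^*, N/I^{n+1}N\bigr),
\]
so $t_{I,N}(M, n) = s_{I,N}((\Om M)^*, n)$. Since $\Om M$ is non-free MCM (a non-free MCM over a hypersurface has infinite projective dimension, forcing all its syzygies to be non-free) and MCM duality $(-)^*$ preserves non-freeness, the partner $(\Om M)^*$ is non-free MCM, and Theorem \ref{m1-gen} yields $r_{I,N}^0 = s_{I,N}^0$.

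The main obstacle is justifying the local duality step used for $e_{I,N}^0 = r_{I,N}^0$, because $N/I^{n+1}N \otimes^L_A M^*$ is bounded above but unbounded below (under hypersurface periodicity, $M^*$ has 2-periodic Tor with finite length modules). If the unbounded local duality is uncomfortable, a safer bypass is to use the syzygy identity $\Ext^{d+1}_A(Y, M) \cong \Ext^1_A(\Om^d Y, M)$ for the finite length module $Y = N/I^{n+1}N$, combined with the MCM duality $\Ext^1_A(L, M) \cong \Ext^1_A(M^*, L^*)$ valid for MCM $L, M$ and the standard identity $\Ext^1_A(M^*, Y^{\vee}) \cong \Tor^A_1(M^*, Y)^{\vee}$ coming from tensor-Hom adjunction; together they yield the same conclusion without invoking unbounded derived functors.
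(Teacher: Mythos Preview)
Your approach is correct and genuinely different from the paper's. The paper proves Theorem~\ref{m3-gen} by induction on $\dim N$: in the base case $\dim N = 1$ it uses Proposition~\ref{hwm} to show that all three invariants are $-1$ simultaneously, and for $\dim N \geq 2$ it picks an $N$-superficial element $x \in I$ that is also filter-regular on the relevant graded $\R(I)$-modules, then uses the exact sequences of~\ref{t}, \ref{e}, \ref{usha} to reduce each degree by exactly one when passing from $N$ to $N/xN$. Your argument instead produces, for each fixed $n$, explicit length identities $e_{I,N}(M,n)=t_{I,N}(M^*,n)$ and $t_{I,N}(M,n)=s_{I,N}((\Omega M)^*,n)$, and then reads off the equality of degrees from Corollary~\ref{m1-gen} and Theorem~\ref{m2-gen}. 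This is more conceptual, yields a pointwise statement rather than merely an equality of degrees, and in fact your first identity already re-derives Theorem~\ref{m2-gen} from Corollary~\ref{m1-gen} without needing the machinery of Section~8.

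Two small points. Your worry about unboundedness in the local-duality step is not a genuine obstacle: because $A$ is Gorenstein one may compute $R\Hom_A(Y\otimes^L_A M^*,A)$ using a \emph{bounded} injective resolution $0\to A\to I^0\to\cdots\to I^d\to 0$, and the hypercohomology spectral sequence $E_2^{p,q}=\Ext^p_A(\Tor^A_q(Y,M^*),A)\Rightarrow \Ext^{p+q}_A(Y,M)$ then collapses on the column $p=d$ (finite-length modules have $\Ext^p_A(-,A)$ concentrated in degree $d$) and converges for trivial reasons. By contrast, your ``safer bypass'' has a missing link as written: combining your three identities leaves you at $\Ext^1_A(M^*,(\Omega^d Y)^*)$, and you still need to pass from $(\Omega^d Y)^*$ to $Y^\vee$. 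This is easily repaired: dualizing the truncated resolution $0\to\Omega^d Y\to F_{d-1}\to\cdots\to F_0\to Y\to 0$ and using $\Ext^i_A(Y,A)=0$ for $i<d$ yields an exact sequence $0\to F_0^*\to\cdots\to F_{d-1}^*\to(\Omega^d Y)^*\to Y^\vee\to 0$, so $(\Omega^d Y)^*$ surjects onto $Y^\vee$ with kernel of finite projective dimension, and Proposition~\ref{pdimf} then gives $\Ext^1_A(M^*,(\Omega^d Y)^*)\cong\Ext^1_A(M^*,Y^\vee)$.
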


Here is an overview of the contents of this paper. In section two we discuss a few preliminaries that we need.
In section three we first recall some results of Theodorescu. Then we prove Lemma \ref{e-growth}. In the next section we prove Theorems \ref{m1}, \ref{m2}. In section five we prove Example \ref{ex}. In the next section we give some preliminary results of the category $\X(N)$. In section seven we prove Theorem \ref{xi}. This implies \ref{m1-gen}. In the next section we prove Theorem \ref{m2-gen}. Finally in section nine we prove Theorem \ref{m3-gen}.
\section{Preliminaries}
In this section we discuss a few preliminary results that we need.
We use \cite{N} for notation on triangulated categories. However we will assume that if $\mathcal{C}$ is a triangulated category then $\Hom_\mathcal{C}(X, Y)$ is a set for any objects $X, Y$ of $\mathcal{C}$.

\s \label{t-f} Let $\C$ be a skeletally  small triangulated category  with shift operator $\Sigma$ and let $\I(\C)$ be the set of isomorphism classes of objects in $\C$. By a \emph{weak triangle function} on $\C$ we mean a function $\xi \colon \I(\C) \rt \Z$ such that
\begin{enumerate}
  \item $\xi(X) \geq 0$ for all $X \in \C$.
  \item $\xi(0) = 0$.
  \item $\xi(X \oplus Y) = \xi(X) + \xi(Y)$ for all $X, Y \in \C$.
  \item $\xi(\Sigma X ) = \xi(X)$ for all $X \in \C$.
  \item If $X \rt Y \rt Z \rt \Sigma X $ is a triangle in $\C$ then
   $\xi(Z) \leq \xi(X) + \xi(Y)$.
\end{enumerate}
\s Set $$\ker \xi = \{ X \mid \xi(X) = 0 \}.$$
The following result (essentially an observation) is a crucial ingredient in our proof of Theorems \ref{m1}, \ref{m2}.
\begin{lemma}(see \cite[2.3]{P4} )
\label{ker-lemma}(with hypotheses as above)
$\ker \xi $ is a thick subcategory of $\C$.
\end{lemma}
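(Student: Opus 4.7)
The plan is to verify directly from the five axioms of a weak triangle function that $\ker \xi$ satisfies the defining properties of a thick subcategory: it is closed under the shift, contains the zero object, satisfies the two-out-of-three property for triangles, and is closed under direct summands.

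First I would observe the two easy closure properties. The zero object lies in $\ker \xi$ by axiom (2), and if $X \in \ker \xi$ then $\xi(\Sigma X) = \xi(X) = 0$ by axiom (4), so $\ker \xi$ is closed under $\Sigma$ (and, by applying the same identity to $\Sigma^{-1} X$, also under $\Sigma^{-1}$). Next, suppose $X \oplus Y \in \ker \xi$. Axiom (3) gives $0 = \xi(X \oplus Y) = \xi(X) + \xi(Y)$, and since axiom (1) forces both summands on the right to be $\geq 0$, we conclude $\xi(X) = \xi(Y) = 0$. Thus $\ker \xi$ is closed under direct summands.

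The main content is the two-out-of-three property. Given a triangle $X \rt Y \rt Z \rt \Sigma X$ in $\C$, assume two of $X,Y,Z$ lie in $\ker \xi$ and show the third does. If $X,Y \in \ker \xi$, axiom (5) gives $\xi(Z) \leq \xi(X)+\xi(Y) = 0$, and combined with axiom (1) this forces $\xi(Z) = 0$. For the other two cases, rotate the triangle: from $Y \rt Z \rt \Sigma X \rt \Sigma Y$ we obtain $\xi(\Sigma X) \leq \xi(Y)+\xi(Z)$, and then axiom (4) converts this into $\xi(X) \leq \xi(Y)+\xi(Z)$; similarly, rotating backwards to $\Sigma^{-1} Z \rt X \rt Y \rt Z$ and using axiom (4) yields $\xi(Y) \leq \xi(X) + \xi(Z)$. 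In each case the hypothesis that two of the three values vanish, together with axiom (1), forces the third to vanish.

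Combining the three items above shows that $\ker \xi$ is a full triangulated subcategory closed under summands, i.e.\ a thick subcategory. No step here should be a real obstacle; the only point that requires any care is to remember to rotate the triangle in both directions when handling the two-out-of-three property, since axiom (5) as stated only bounds the length of the ``third'' term $Z$ in terms of the first two.
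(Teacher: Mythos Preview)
Your argument is correct and complete: the verification of the zero object, closure under $\Sigma^{\pm 1}$, closure under summands via additivity and nonnegativity, and the two-out-of-three property via axiom (5) together with rotation of the triangle is exactly the straightforward proof. The paper does not supply its own proof of this lemma but simply cites \cite[2.3]{P4}; your direct verification is the natural (and essentially the only) argument, so there is nothing substantively different to compare.
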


\s Let $(A,\m)$ be a hypersurface ring. Let $N$ be a finitely generated $A$-module and let $I$ be an  ideal in $A$.
Let $M$ be a MCM $A$-module. Assume either $M$ is free on $\Spec^0(A)$ or $\ell(N/IN)$ is finite.
Set for $n \geq 0$
\begin{align*}
  t_{I, N}(M,n) &=  \ell(\Tor^A_1(M, N/I^{n+1}N)) \\
  s_{I, N}(M,n) &= \ell(\Ext^1_A(M, N/I^{n+1}N)) \\
  e_{I, N}(M, n) &= \ell(\Ext^{d+1}_A( N/I^{n+1}N, M)).
\end{align*}
Let $\Omega^i_A(M)$ denote the $i^{th}$-syzygy of $M$.
We prove
\begin{lemma}\label{basic}
(with hypotheses as above)
\begin{enumerate}[\rm (1)]
  \item For all $n \geq 0$  the functions $t_{I, N}(-,n), s_{I, N}(-,n)$ and $e_{I, N}(-,n)$ are functions on $\CMS(A)$
  \item Assume $\ell(N/IN)$ is finite.
  For all $n \geq 0$ we have $t_{I, N}(M,n) = t_{I, N}(\Omega^1_A(M),n)$, \\ $s_{I, N}(M,n) = s_{I, N}(\Omega^1_A(M),n)$ and $e_{I, N}(M,n) = e_{I, N}(\Omega^1_A(M),n)$.
\end{enumerate}
\end{lemma}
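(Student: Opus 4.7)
My plan is to treat (1) and (2) separately, with (2) resting on Eisenbud's matrix factorisation theorem and (for $e_{I,N}$) on local duality.

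For (1), each of the three functions is additive on direct sums (immediate from additivity of $\Tor$ and $\Ext$) and vanishes on every free module, so each factors through $\CMS(A)$: recall that $M \cong M'$ in $\CMS(A)$ means $M \oplus F \cong M' \oplus F'$ for some free $F, F'$. Vanishing on a free $F$ is obvious for $t_{I,N}$ and $s_{I,N}$ because $F$ is projective. For $e_{I,N}$, since $A$ is Gorenstein of dimension $d$ one has $\mathrm{injdim}_A A = d$, so $\Ext^{d+1}_A(-, F) = 0$.

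For (2), set $L = N/I^{n+1}N$; by assumption $\ell(L) < \infty$. Since $A = Q/(f)$ is a hypersurface (after completing if needed), the MCM module $M$ has a $2$-periodic minimal free resolution
\[ \cdots \rt A^r \xrightarrow{\phi} A^r \xrightarrow{\psi} A^r \xrightarrow{\phi} A^r \rt M \rt 0 \]
coming from a matrix factorisation $\phi\psi = \psi\phi = f \cdot I$, which acts as zero on any $A$-module. Tensoring with $L$ yields a $2$-periodic complex on the \emph{finite-length} module $L^r$, whose two distinct cohomologies are $\Tor^A_1(M, L)$ and $\Tor^A_2(M, L)$. Using $\phi\psi = \psi\phi = 0$ on $L^r$ and the identities $\ell(L^r) = \ell(\ker\phi) + \ell(\image\phi) = \ell(\ker\psi) + \ell(\image\psi)$, a direct length count gives $\ell(\Tor^A_1(M, L)) = \ell(\Tor^A_2(M, L))$. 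Combined with the syzygy isomorphism $\Tor^A_1(\Omega M, L) \cong \Tor^A_2(M, L)$ (from the sequence $0 \rt \Omega M \rt F \rt M \rt 0$ and $\Tor^A_{\geq 1}(F, -) = 0$) this yields $t_{I,N}(M, n) = t_{I,N}(\Omega M, n)$. The argument for $s_{I,N}$ is identical, with $\Hom(-, L)$ replacing $-\otimes L$.

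For $e_{I,N}$ I would reduce to the $t$ identity via local duality. For finite-length $X$ on Gorenstein $A$ of dimension $d$, $\Ext^p_A(X, A) = 0$ for $p \neq d$ and $\Ext^d_A(X, A) = X^\vee$ (Matlis dual). The reflexivity $M \cong \Hom_A(M^*, A)$ (with $M^* := \Hom_A(M, A)$), together with tensor-hom adjunction, gives $R\Hom_A(L, M) \simeq R\Hom_A(L \otimes^L_A M^*, A)$. The associated hypercohomology spectral sequence
\[ E_2^{p, q} = \Ext^p_A(\Tor^A_q(L, M^*), A) \Rightarrow \Ext^{p+q}_A(L, M) \]
collapses onto the row $p = d$ because each $\Tor^A_q(L, M^*)$ has finite length, yielding $\Ext^{d+q}_A(L, M) \cong \Tor^A_q(L, M^*)^\vee$ for $q \geq 0$; in particular $e_{I,N}(M, n) = \ell(\Tor^A_1(L, M^*))$. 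Dualising $0 \rt \Omega M \rt F \rt M \rt 0$ by $\Hom_A(-, A)$ and using $\Ext^1_A(M, A) = 0$ yields $(\Omega M)^* \cong \Omega^{-1}(M^*)$, which is stably isomorphic to $\Omega(M^*)$ on the hypersurface (since $\Omega^2 \cong \mathrm{id}$ on $\CMS(A)$). Applying the established $t$ identity to the MCM module $M^*$ then finishes the proof. The main delicate point I foresee is assembling this local-duality reduction; once it is in hand, all three cases unify around the same $2$-periodicity calculation on a finite-length module.
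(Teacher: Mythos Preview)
Your argument is correct. For part~(1) and for the $t_{I,N}$ and $s_{I,N}$ cases in part~(2) you do exactly what the paper does: the paper strips free summands, uses Eisenbud's two exact sequences $0 \to \Omega M \to A^r \to M \to 0$ and $0 \to M \to A^r \to \Omega M \to 0$ of equal rank, and reads off the symmetric length formula
\[
t_{I,N}(M,n)=\ell(\Omega M\otimes W)+\ell(M\otimes W)-r\,\ell(W)\qquad(W=N/I^{n+1}N);
\]
your periodic-complex length count is the same computation in different clothing. One small omission: the resolution $\cdots\to A^r\to A^r\to M$ with \emph{constant} rank $r$ requires $M$ to have no free summand, so you should first reduce to that case via part~(1), as the paper does explicitly.

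For $e_{I,N}$ your route genuinely differs. The paper merely says ``similar'', meaning the same symmetric-formula trick: since $W$ has finite length and $M,\Omega M$ are MCM, $\Ext^i_A(W,-)$ vanishes on them for $i<d$, so the sequence $0\to\Omega M\to A^r\to M\to 0$ gives
\[
0\to\Ext^d(W,\Omega M)\to\Ext^d(W,A)^r\to\Ext^d(W,M)\to\Ext^{d+1}(W,\Omega M)\to 0,
\]
and again the length formula is symmetric in $M$ and $\Omega M$. You instead establish the duality $\Ext^{d+1}_A(W,M)\cong\Tor^A_1(W,M^*)^\vee$ via a collapsing hyperext spectral sequence and reduce to the Tor identity applied to $M^*$, using $(\Omega M)^*\cong\Omega(M^*)$ stably. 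This is heavier machinery, but it buys you the clean relation $e_{I,N}(M,n)=t_{I,N}(M^*,n)$, which is of independent interest and in fact gives another proof that $e_{I,N}^0=r_{I,N}^0$ for perfect $N$. The paper's direct argument is shorter for this lemma alone.
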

\begin{proof}
(1) Let $E = M \oplus F = L \oplus G$ where $F, G$ are free $A$-modules. Then by definition
$t_{I, N}(E, n) = t_{I, N}(M,n) = t_{I, N}(L,n)$. Thus $t_{I, N}(-, n)$ is a function on $\CMS(A)$.

 The proof for assertions on $s_{I, N}(-, n)$ and $e_{I, N}(-, n)$ are similar.

(2) We may assume that $M$ has no free summands. Set $L = \Omega_A^1(M)$.  Let $ 0 \rt L  \rt F \rt M \rt 0$ be the minimal presentation of $M$ with $F = A^r$. Then note as $A$ is a hypersurface ring and $M$ is MCM without free summands we get that a minimal presentation of $L$ is as follows $0 \rt M \rt G \rt L \rt 0$ where $G = A^r$,  see \cite[6.1(ii)]{E}.
By using the first exact sequence we get
\begin{align*}
0 \rt \Tor^A_1(M, N/I^{n+1}N) &\rt L\otimes N/I^{n+1}(L\otimes N)\rt (N/I^{n+1}N)^r\\
&\rt M\otimes N/I^{n+1}(M\otimes N) \rt 0.
\end{align*}
So we have
\[
t_{I, N}(M,n) = \ell(L\otimes N/I^{n+1}(L\otimes N)) + \ell(M\otimes N/I^{n+1}(M\otimes N)) - r\ell(N/I^{n+1}N).
\]
Using the second exact sequence we find that $t_{I, N}(M,n) = t_{I, N}(L,n)$. The result follows.

The proof for assertions on $s_{I, N}(-, n)$ and $e_{I, N}(-, n)$ are similar.
\end{proof}

\section{Polynomial growth}
In this section we first recall some results of Theodorescu. Finally we prove Lemma \ref{e-growth}. Let $F_N(I) = \bigoplus_{n \geq 0}I^nN/\m I^nN$  the fiber-cone of $I$ \wrt \ $N$ and let  $s_N(I) = \dim F_N(I)$. We note that $s_N(I) \leq \dim N$.
The first result is
\begin{theorem}\label{complex}(\cite[Proposition 3]{Theo}):
Let $(A, \m)$ be  a Noetherian local ring. Let $\mathcal{C} \colon F_2 \rt F_1 \rt F_0$ be a complex with $F_1, F_0$ finitely generated $A$-modules. Let
$N$ be another finitely generated $A$-module and let $I$ be an ideal of $A$.
Assume $H_1(\mathcal{C} \otimes N/I^nN)$ has finite length for all $n \geq 1$. Then
\begin{enumerate}[\rm (1)]
  \item The function $n \rt \ell(H_1(\mathcal{C} \otimes N/I^n N))$ of polynomial type, say of degree $u_N$.
  \item We have $u_N \leq \max \{ \dim H_1(\mathcal{C} \otimes N), s_N(I) -1 \}$.
  \item If $\dim H_1(\mathcal{C} \otimes N)\geq s_N(I) $ then the inequality in $\rm{(2)}$ becomes an equality.
\end{enumerate}
\end{theorem}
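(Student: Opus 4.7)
The plan is to reduce $H_1(\mathcal{C} \otimes N/I^nN)$ via the Artin-Rees lemma to two pieces whose asymptotic length can be read off from classical Hilbert polynomial data. Write $F := F_1 \otimes N$, $E := F_0 \otimes N$, $Z := \ker(d_1 \otimes N) \subseteq F$, $B := \image(d_2 \otimes N) \subseteq Z$, $W := \image(d_1 \otimes N) \subseteq E$, $K := Z/B = H_1(\mathcal{C} \otimes N)$, and $\overline{F} := F/Z \cong W$. Unwinding definitions gives
\[
H_1(\mathcal{C} \otimes N/I^nN) \;=\; Z_n/(B + I^nF), \quad \text{where } Z_n := \{\, x \in F : (d_1 \otimes N)(x) \in I^nE \,\}.
\]
Applying Artin-Rees to $W \subseteq E$ yields a constant $c \geq 0$ such that $I^nE \cap W \subseteq I^{n-c}W$ for $n \geq c$; pulling this back along $d_1 \otimes N$ gives the sandwich $Z + I^nF \subseteq Z_n \subseteq Z + I^{n-c}F$.

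Inserting the intermediate submodule $Z + I^nF$ produces a short exact sequence
\[
0 \longrightarrow (Z+I^nF)/(B+I^nF) \longrightarrow H_1(\mathcal{C}\otimes N/I^nN) \longrightarrow Z_n/(Z+I^nF) \longrightarrow 0.
\]
The left-hand term is isomorphic to $K/J_n$ with $J_n := (B + Z \cap I^nF)/B \subseteq K$, and a second Artin-Rees applied to $Z \subseteq F$ gives a constant $c'$ with $I^{n+c'}K \subseteq J_{n+c'} \subseteq I^{n}K$ for $n$ large. Hence $\ell(K/J_n)$ has polynomial type with the same leading behaviour as the Hilbert-Samuel function of $K$ relative to $I$, so its degree is at most $\dim K$. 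The right-hand term embeds into $I^{n-c}\overline{F}/I^n\overline{F}$, and since $\overline{F}$ is a submodule of $E \cong N^{r_0}$, the $I$-adic growth of $\overline{F}$ is governed by the fiber cone of $I$ with respect to $N$; the differencing inherent in $I^{n-c}\overline{F}/I^n\overline{F}$ drops the degree by one, giving an upper bound of $s_N(I) - 1$ on the finite-length subquotient that $W_n := Z_n/(Z + I^nF)$ contributes.

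Adding contributions in the short exact sequence establishes polynomiality and the bound in (1) and (2). For (3), when $\dim K \geq s_N(I)$, the leading term of $\ell(K/J_n)$ is of strictly higher degree than that of $\ell(W_n)$, so additivity of length in the short exact sequence forces equality $u_N = \dim K$. The main obstacle I foresee is making precise the right-hand analysis: a priori $\overline{F}/I\overline{F}$ need not have finite length, so one cannot directly quote a Hilbert-Samuel formula for it; instead one must restrict to the $I$-torsion portion of $\overline{F}$ and argue that $W_n$ is captured there. The first Artin-Rees step also requires care, since the sandwich for $Z_n$ is derived by lifting through $d_1 \otimes N$ rather than by a direct inclusion.
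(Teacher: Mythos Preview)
The paper does not prove this statement: it is quoted from Theodorescu \cite[Proposition~3]{Theo} and used as a black box, so there is no in-paper argument to compare your proposal against.

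That said, your outline is the standard Artin--Rees decomposition and is, to my knowledge, essentially Theodorescu's own route. A cleaner way to write your right-hand term is to push it forward along $d_1\otimes N$: the isomorphism $F/Z\cong W$ identifies $Z_n/(Z+I^nF)$ with $(I^nE\cap W)/I^nW$, so your short exact sequence reads
\[
0 \longrightarrow K/J_n \longrightarrow H_1(\mathcal{C}\otimes N/I^nN) \longrightarrow (I^nE\cap W)/I^nW \longrightarrow 0.
\]
Your treatment of the left piece is fine once you note that the finite-length hypothesis forces $K/IK$ to have finite length (because $K/J_n$ surjects onto $K/I^{n-c'}K$); then $\{J_n\}$ is an $I$-stable filtration on $K$ and the left piece is a genuine Hilbert--Samuel function of degree exactly $\dim K$, not merely at most.

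The gap you flag in the right-hand term is real and is where the content lies. The fix is not to embed into $I^{n-c}\overline{F}/I^n\overline{F}$ (which need not have finite length, as you say) but to observe that $T:=\bigoplus_n (I^nE\cap W)/I^nW$ is a graded $\R(I)$-submodule of $\R_I(E)/\R_I(W)$, hence finitely generated; since each $T_n$ has finite length, $\m^sT=0$ for some $s$. When $F_0$ is free of rank $r_0$ one has $\R_I(E)=\R_I(N)^{r_0}$, so $T$ is a finitely generated subquotient of $\R_I(N)^{r_0}/\m^s\R_I(N)^{r_0}$, whose Hilbert function has degree $s_N(I)-1$; this gives the bound. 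Note you are tacitly assuming $F_0$ free when you write $E\cong N^{r_0}$; the theorem as stated in the paper only asks $F_0,F_1$ to be finitely generated, and without freeness the link to $s_N(I)$ (rather than the analytic spread $s(I)$ of $I$ on $A$) needs an extra step.
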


As a corollary Theodorescu obtains:
\begin{corollary}
\label{tor-ext-deg}(\cite[Corollary 4]{Theo}:) Let $(A, \m)$ be  a Noetherian local ring. Let $M, N$ be finitely generated $A$-modules and let $I$ be an ideal in $A$. Fix $i \geq 0$.
\begin{enumerate}[\rm (I)]
  \item Assume $\ell(\Tor^A_i(M, N/I^n N))$ is finite for all $n \geq 1$. Then
  \begin{enumerate}[\rm (i)]
    \item The function $n \rt \ell(\Tor^A_i(M, N/I^n N))$ is of polynomial type say of degree $t_I(M, N)$.
    \item $t_I(M, N) \leq \max \{ \dim \Tor^A_i(M, N), s_N(I) - 1\}$.
    \item If $\dim \Tor^A_i(M, N) \geq s_N(I)$ then the inequality in $\rm{(ii)}$ becomes an equality.
  \end{enumerate}
  \item Assume $\ell(\Ext^i_A(M, N/I^n N))$ is finite for all $n \geq 1$. Then
  \begin{enumerate}[\rm (i)]
    \item The function $n \rt \ell(\Ext^i_A(M, N/I^n N))$ is of polynomial type say of degree $e_I(M, N)$.
    \item $e_I(M, N) \leq \max \{ \dim \Ext^i_A(M, N), s_N(I) - 1\}$.
    \item If $\dim \Ext^i_A(M, N) \geq s_N(I)$ then the inequality in $\rm{(ii)}$ becomes an equality.
  \end{enumerate}
\end{enumerate}
\end{corollary}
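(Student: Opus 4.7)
The plan is to deduce the corollary directly from Theorem \ref{complex} by choosing, for each of Tor and Ext at level $i$, a three-term complex whose middle homology computes the relevant functor. Since $A$ is Noetherian local and $M$ is finitely generated, let $F_\bullet \to M \to 0$ be a free resolution of $M$ by finitely generated free $A$-modules.

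For part (I), consider the three-term subcomplex
\[
\mathcal{C}\colon F_{i+1} \xrightarrow{d_{i+1}} F_i \xrightarrow{d_i} F_{i-1},
\]
re-indexed as $F_2 \to F_1 \to F_0$ in the notation of Theorem \ref{complex}. Because $F_\bullet$ is a projective resolution of $M$, for any $A$-module $X$ we have $H_1(\mathcal{C} \otimes_A X) = \ker(d_i \otimes 1)/\image(d_{i+1}\otimes 1) = \Tor^A_i(M,X)$. Taking $X = N/I^nN$, the hypothesis of finite length of $\Tor^A_i(M, N/I^nN)$ is exactly the hypothesis of Theorem \ref{complex}. Taking $X = N$ identifies $H_1(\mathcal{C} \otimes N) = \Tor^A_i(M, N)$. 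The three conclusions (i)--(iii) of part (I) are then precisely the translations of the three conclusions of Theorem \ref{complex}.

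For part (II), the main point is to express $\Ext^i_A(M, -)$ as the middle homology of a three-term complex of \emph{tensor products}, so that Theorem \ref{complex} can again be applied. For this one uses the natural isomorphism
\[
\Hom_A(F, X) \;\cong\; \Hom_A(F,A) \otimes_A X,
\]
valid for every finitely generated free $A$-module $F$ and every $A$-module $X$. Setting $D^\bullet := \Hom_A(F_\bullet, A)$, a cochain complex of finitely generated free $A$-modules, this gives $\Hom_A(F_\bullet, N/I^nN) \cong D^\bullet \otimes_A N/I^nN$, and hence $\Ext^i_A(M, N/I^nN) = H^i(D^\bullet \otimes_A N/I^nN)$. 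Now take the three-term slice
\[
\mathcal{C}'\colon D^{i-1} \to D^i \to D^{i+1},
\]
and view it as a chain complex by reversing the indexing; its middle homology after tensoring with $X$ is $\Ext^i_A(M, X)$. Theorem \ref{complex} then yields (i)--(iii) of part (II) verbatim, with $\dim \Ext^i_A(M,N)$ replacing $\dim \Tor^A_i(M,N)$.

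The only real subtlety is the switch from Hom to tensor in the Ext case; it is essential that the resolving modules $F_j$ be free (not just projective) of finite rank so that the isomorphism $\Hom_A(F_j, X) \cong \Hom_A(F_j, A) \otimes_A X$ holds functorially in $X$. Once this identification is in place, both parts reduce to a single application of Theorem \ref{complex}, and no further work is needed.
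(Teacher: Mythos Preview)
Your proof is correct and is precisely the standard derivation of this corollary from Theorem \ref{complex}: one picks a free resolution $F_\bullet$ of $M$, slices out the three terms around degree $i$ (dualizing first for Ext), and reads off Tor or Ext as the middle homology. The paper itself does not supply a proof for this statement; it is quoted directly from Theodorescu's paper (\cite[Corollary~4]{Theo}), introduced only with ``As a corollary Theodorescu obtains,'' so there is nothing further to compare against.
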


We also need a Lemma due to Theodorescu.
\begin{lemma}\label{vee}(\cite[Lemma 1]{Theo}:)
Let $(A, \m)$ be a Noetherian local ring and let $I$ be an ideal in $A$. Let $\mathcal{C}$ be a complex of (not-necessarily finitely generated) $A$-modules. Let $E$ be an injective $A$-module.
and write $M^\vee = \Hom_A(M, E)$ for any $A$-module $M$. Then we have an isomorphism of complexes
$$ \Hom(A/I, \mathcal{C})^\vee \cong \mathcal{C}^\vee \otimes A/I.$$
\end{lemma}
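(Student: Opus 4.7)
The plan is to establish the claimed identification one component at a time and then lift it to an isomorphism of complexes by naturality. Fix a component $M$ of $\mathcal{C}$ and choose generators $x_1, \ldots, x_s$ of $I$. Applying $\Hom_A(-,M)$ to the finite presentation $A^s \to A \to A/I \to 0$ (where the left map sends the standard basis to $(x_1, \ldots, x_s)$) yields the left exact sequence
$$0 \to \Hom_A(A/I, M) \to M \xrightarrow{\phi} M^s, \qquad \phi(m) = (x_1 m, \ldots, x_s m).$$
Since $E$ is injective, $(-)^\vee = \Hom_A(-, E)$ is exact; applying it produces the right exact sequence
$$(M^\vee)^s \xrightarrow{\phi^\vee} M^\vee \to \Hom_A(A/I, M)^\vee \to 0.$$
A direct computation identifies $\phi^\vee$ with the map $(g_1, \ldots, g_s) \mapsto \sum_i x_i g_i$, whose image is exactly $IM^\vee$. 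Hence the cokernel is $M^\vee/IM^\vee = M^\vee \otimes_A A/I$, giving the identification $\Hom_A(A/I, M)^\vee \cong M^\vee \otimes_A A/I$ in each degree.

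The remaining step is to check compatibility of this identification with the differentials of $\mathcal{C}$. Both $\Hom_A(A/I, -)^\vee$ and $(-)^\vee \otimes_A A/I$ are functorial in $M$, so any $A$-linear map $M \to M'$ induces compatible morphisms on the two sides; in particular the (contravariantly induced) differentials of $\mathcal{C}^\vee$ commute with the componentwise isomorphism. Assembling the pieces yields the desired isomorphism of complexes.

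There is no essential obstacle here: the argument is a formal exactness-and-naturality computation. The only point requiring care is the contravariance of Matlis duality, which reverses the direction of the differentials so that both $\Hom_A(A/I, \mathcal{C})^\vee$ and $\mathcal{C}^\vee \otimes_A A/I$ become cochain complexes (when $\mathcal{C}$ is a chain complex) indexed compatibly, and one should keep this bookkeeping straight when writing out the chain maps.
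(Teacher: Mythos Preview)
Your argument is correct. The componentwise computation via a finite presentation of $A/I$ is the standard way to see that the natural map $M^\vee \otimes_A A/I \to \Hom_A(A/I, M)^\vee$ is an isomorphism, and naturality in $M$ then assembles these into an isomorphism of complexes.

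There is nothing to compare against in the paper itself: the lemma is merely quoted from \cite[Lemma~1]{Theo} and used as a black box in the proof of Lemma~\ref{e-growth-body}, with no argument supplied. Your write-up therefore fills in what the paper leaves to the reference. One minor stylistic remark: rather than fixing generators of $I$, you could phrase the module-level isomorphism as the specialization to $N = A/I$ of the natural map $N \otimes_A M^\vee \to \Hom_A(N, M)^\vee$, $n \otimes f \mapsto (g \mapsto f(g(n)))$, which is an isomorphism for finitely presented $N$ by reducing to $N = A$ and using right-exactness of both sides; this makes the naturality (and hence the compatibility with differentials) completely transparent. But what you wrote is already fine.
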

Next we prove
Lemma \ref{e-growth}. We restate it here for the convenience of the reader.
\begin{lemma}
\label{e-growth-body} Let $(A,\m)$ be a Gorenstein local ring of dimension $d$ and let $M$ be a MCM $A$-module free on $\Spec^0(A)$. Let $N$ be a finitely generated $A$-module and let $I$ be an ideal in $A$. Fix $j \geq d +1$. Then the function $n \mapsto \ell(\Ext^{j}_A( N/I^{n+1}N, M))$ is of polynomial type and of degree $\leq d$.
\end{lemma}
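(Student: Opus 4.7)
The plan is to convert the Ext-statement into a Tor-statement via Matlis duality, and then bound the latter by applying Theodorescu's Theorem~\ref{complex} through a hyperTor spectral sequence. I first replace $A$ by its $\m$-adic completion, which is allowed since completion preserves the lengths in question. Let $E$ be the injective hull of $A/\m$ and set $(-)^\vee = \Hom_A(-,E)$.

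The first step is the Matlis-duality identity
\[
\Ext^j_A(K,M)^\vee \cong \Tor^A_j(K,M^\vee)
\]
for every finitely generated $A$-module $K$. To derive it, take a finitely generated projective resolution $P_\bullet \to K$, apply the special case $\Hom_A(P_i,M)^\vee \cong P_i \otimes_A M^\vee$ of Lemma~\ref{vee} in each degree (each $P_i$ is a finite direct sum of copies of $A$), and take homology, using exactness of $(-)^\vee$. Setting $K = N/I^{n+1}N$ gives $\ell(\Ext^j_A(N/I^{n+1}N,M)) = \ell(\Tor^A_j(N/I^{n+1}N,M^\vee))$.

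The main obstacle is that $M^\vee$ is only artinian (not finitely generated), so one cannot feed a projective resolution of $M^\vee$ directly into Theorem~\ref{complex}. I work around this via the hyperTor spectral sequence for the derived triple tensor $N \otimes^{L}_A A/I^{n+1} \otimes^{L}_A M^\vee$, whose $E_2$-page
\[
E_2^{p,q} = \Tor^A_p\bigl(\Tor^A_q(N,A/I^{n+1}),\,M^\vee\bigr)
\]
places the term of interest, $\Tor^A_j(N/I^{n+1}N,M^\vee)$, at the edge position $(j,0)$. Re-dualizing each entry by the Matlis identity above converts it into $\Ext^p_A(\Tor^A_q(N,A/I^{n+1}),M)^\vee$.

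For each fixed $(p,q)$ with $p+q = j$, the module $\Tor^A_q(N,A/I^{n+1})$ equals the first homology of the three-term truncation $\mathcal{C}_q \colon F_{q+1} \to F_q \to F_{q-1}$ of a finitely generated free resolution $F_\bullet \to N$ tensored with $A/I^{n+1}$. Applying Theorem~\ref{complex} to $\mathcal{C}_q$ bounds its length growth by a polynomial in $n$ of degree $\leq s_A(I) - 1 \leq d - 1$. A second application of Theodorescu's machinery, combined with the hypothesis that $M$ is free on $\Spec^0(A)$ (which forces $\ell\Ext^p(-,M)$ to be finite on finitely generated modules), bounds each $\ell(\Ext^p_A(\Tor^A_q(N,A/I^{n+1}),M))$ by a polynomial of degree $\leq d$ in $n$. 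Summing the finitely many contributions for $p+q = j$ gives the desired polynomial bound of degree $\leq d$ on $\ell(\Ext^j_A(N/I^{n+1}N,M))$.
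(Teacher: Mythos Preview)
Your argument has a genuine gap in the spectral-sequence step. The quantity you want, $\Tor^A_j(N/I^{n+1}N, M^\vee)$, is the $E^2_{j,0}$ term of your spectral sequence, not its abutment. Summing $\ell(E^2_{p,q})$ over $p+q=j$ bounds $\ell(H_j)$ of the abutment $N\otimes^L A/I^{n+1}\otimes^L M^\vee$, but it does not bound $\ell(E^2_{j,0})$: the differentials $d_r$ leave the position $(j,0)$ (none come in from the fourth quadrant), so $E^\infty_{j,0}$ is only a submodule of $E^2_{j,0}$, and the quotients by those outgoing differentials---precisely what you would need to control---are not bounded by anything in your argument.

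Two further problems remain even apart from this. First, your ``second application of Theodorescu's machinery'' is not an application of Theorem~\ref{complex}: that result concerns $n\mapsto\ell(H_1(\mathcal{C}\otimes N'/I^nN'))$ for a \emph{fixed} complex $\mathcal{C}$ and a \emph{fixed} module $N'$, whereas in $\Ext^p_A(\Tor^A_q(N,A/I^{n+1}),M)$ the first argument varies with $n$ in a way that is not of the form $N'/I^nN'$. Second, even a successful upper bound would not prove the lemma: ``polynomial type'' means the function agrees with a polynomial for $n\gg 0$, which is strictly stronger than being bounded above by a polynomial.

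The paper's proof avoids all of this by exploiting the shape of the minimal injective resolution $\mathcal{C}$ of $M$: since $M$ is MCM and free on $\Spec^0(A)$, one has $\mathcal{C}^i=E(k)^{r_i}$ for $i\geq d$, so for $j\geq d+1$ the relevant Ext is computed entirely inside the $\m$-torsion subcomplex $\mathcal{D}=\Gamma_\m(\mathcal{C})$. A single Hom--tensor adjunction together with Lemma~\ref{vee} then rewrites the length as $\ell\bigl(H_{-j}(\Hom(N,\mathcal{D})^\vee\otimes_{\wh A}\wh A/I^{n+1}\wh A)\bigr)$, where $\Hom(N,\mathcal{D})^\vee$ is a complex of finitely generated $\wh A$-modules. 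Theorem~\ref{complex} now applies directly, giving both polynomial type and the degree bound in one stroke.
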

\begin{proof}
We have nothing to show if $I = A$ as the relevant function is zero. So assume that $I \subseteq \m$. We first consider the case when $d = 0$. Then $I^n N = 0$ for all $n \gg 0$.
Thus $\ell(\Ext^{j}_A( N/I^{n+1}N, M)) = \ell(\Ext^j_A(N, M))$ for all $n \gg 0$. The result follows.

So we assume $d \geq 1$. Let $\mathcal{C}$ be the minimal injective resolution of $M$. Then note that for $i \geq d$ we have $\mathcal{C}^i = E^{r_i}$ where $E$ is the injective hull of $k$ and $r_i$ is a finite non-negative integer. Let $(-)^\vee = \Hom(-, E)$.
Let $\mathcal{D} = \Gamma_\m(\mathcal{C})$ be the complex obtained by applying the $\m$-torsion  functor $\Gamma_\m(-)$ on $\mathcal{C}$.
As $j \geq d +1$ we have
\begin{align*}
  \ell(\Ext^j(N/I^{n+1}N, M) &= \ell(H^j(\Hom(N/I^{n+1} N, \mathcal{D}))) \\
  &=  \ell(H^j(\Hom(A/I^{n+1} , \Hom(N,\mathcal{D})))) \\
   &= \ell(H_{-j}(\Hom(A/I^{n+1} , \Hom(N,\mathcal{D})))^\vee) \\
  &= \ell(H_{-j}(\Hom(N, \mathcal{D})^\vee \otimes_A A/I^{n+1})).
\end{align*}
The last equality is due to Lemma \ref{vee}. We note that $\Hom(N, \mathcal{D})^\vee $ is a complex of finitely generated modules over $\wh{A}$ the completion of $A$. Furthermore we note that if $U$ is an $\wh{A}$-module then $U\otimes_A A/I = U\otimes_{\wh{A}}\wh{A}/I\wh{A}$. The result now follows from \ref{complex}.
\end{proof}
The above result does not hold when $j = d$.
\begin{example}
 We take $A$ to be a regular local ring and $M = N = A$. Then $\ell(\Ext^d_A(A/I^n, A)) = H^0_\m(A/I^n)$. This function need not be of polynomial type, see \cite[3.2]{CT}.
\end{example}

\section{Proof of Theorems \ref{m1}, \ref{m2}}
In this section we give
\begin{proof}[Proofs of Theorems \ref{m1}, \ref{m2}]
We first note that for any MCM $M$ we have
$$\deg t_{ I, N}(M \oplus \Omega^1(M), -) \leq \dim N - 1, $$ see
\ref{tor-ext-deg}.
 Recall the degree of the zero polynomial is $-1$. Set
 $$ r = \max \{\deg  t_{I, N}(M\oplus \Omega^1(M), -) \mid M \in  \CMS^0(A) \}. $$
If $r = -1$ then we have nothing to show. So assume $r \geq 0$.  For $M \in \CMS^0(A)$ define
\[
\xi(M) = \lim_{n \rt \infty} \frac{r!}{n^r} t_{ I, N}(M \oplus \Omega^1(M), n)
\]
Then note that $\xi(M)$ is finite non-negative integer. We also note that $\xi(M) = 0$ if and only if $\deg  t_{I, N}(M\oplus \Omega^1(M), -) \leq r -1$.

Claim: $\xi(-)$ is a weak triangle function on $\CMS^0(A)$; see \ref{t-f}.\\
Assume the claim for the time being. We note that by construction there exists $L \in \CMS^0(A)$ with $r = \deg  t_{I, N}(L\oplus \Omega^1(L), -)$.
So $\ker \xi \neq \CMS^0(A)$. Also $\ker \xi$ is a thick subcategory of $\CMS^0(A)$.  As $\CMS^0(A)$ has no proper thick subcategories, see \cite[6.6]{T}, it follows that $\ker \xi = 0$.
Therefore
$\deg t_{ I, N}(M \oplus \Omega^1(M), -) = r$ for all non-free MCM $M \in \CMS^0(A)$.

It remains to show $\xi(-)$ is a weak triangle function on $\CMS^0(A)$. The first three properties is trivial to show.
We have $\xi(M) = \xi(\Omega^{-1}(M))$ as $M$ has period two.
Let $M \rt U \rt V \rt \Omega^{-1}(M)$ be a triangle in $\CMS^0(A)$. Notice we have an exact sequence of $A$-modules
$$0 \rt U \rt V\oplus F \rt \Omega^{-1}(M) \rt 0, \quad \text{where $F$ is free}.$$
So we have an exact sequence
 $$0 \rt \Omega(U) \rt \Omega(V)\oplus G\rt M \rt 0, \quad \text{where $G$ is free}.$$
 Taking direct sum of the above two exact sequences we obtain an exact sequence
 $$ 0 \rt U \oplus\Omega(U) \rt V \oplus\Omega(V)\oplus F \oplus G \rt \Omega^{-1}(M)\oplus M \rt 0.$$
 Note $\Omega^{-1}(M) = \Omega(M)$. It follows that
 $$t_{I, N}(V \oplus \Omega(V), n) \leq t_{I, N}(U \oplus \Omega(U), n)  + t_{I, N}(M \oplus \Omega(M), n).$$
 The result follows.

 The proof for the functions $s_{I, N}(M \oplus \Omega(M), -)$ and $e_{I, B}(M \oplus \Omega(M), -)$ is the same. In the first case degree is bounded above by $\dim N - 1$, see \ref{tor-ext-deg}
 and in the second case the degree is bounded above by $\dim A$, see \ref{e-growth}. Rest all the arguments are similar.
\end{proof}

\s \label{lucho} We show that $t_{I, N}(M , -)$ need not be constant. Let $Q = k[[u_1, x_1]]$. Set $A = Q/(u_1x_1)$. Set $M = N = A/(u_1)$. Also set $I = 0$. We have for $i \geq 1$;
$\Tor^A_{2i}(M, M) = 0$ and $\Tor^A_{2i-1}(M, M) = k$; see \cite[4.3]{AB}.
\section{example \ref{ex}}
We recall the example here.
Let $(A, \m)$ be a hypersurface ring of dimension $d \geq 1$ and \emph{NOT} an isolated singularity. Let $P$ be a prime ideal in $A$ with $t = \dim A/P > 0$ and $A_P$ NOT regular local.
Set $N = A/P$, $I = \m$ and $M = \Omega^{d}(A/P) \oplus \Omega^{d+1}(A/P)$.

We note $s_N(\m) = \dim N$. Let $E \in \CMS^0(A)$. Then by Corollary \ref{tor-ext-deg} we get $r_{\m, N}(E, -) \leq \dim N - 1 = t -1$. So $r_{\m, N}^0 \leq t -1$.

We note that $\Tor^A_1(M, N)_P \neq 0$ by  \cite[1.9]{HW} (also see \cite[1.1]{M}). So \\ $\dim \Tor^A_1(M, N) = t = s_N(\m)$.
Then $\deg t_{\m, N}(M, -) = t$ by  Corollary \ref{tor-ext-deg}.

\section{An investigation into the category $\X(N)$}
In this section we give some preliminary results of the category $\X(N)$. We first show:
\begin{proposition}\label{pdimf}
Let $(A, \m)$ be a Gorenstein local ring of dimension $d$  and let $M$ be a MCM $A$-module. If $N$ is an $A$-module with $\projdim_A N < \infty$ then \\ $\Tor^A_i(M, N) = 0$ for all $ i \geq 1$ and $\Ext_A^i(M, N) = 0$ for all $ i \geq 1$.
\end{proposition}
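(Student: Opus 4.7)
My plan is to handle the two statements separately: $\Ext$-vanishing by induction on $p := \projdim_A N$, and $\Tor$-vanishing via the cosyzygy trick available for MCM modules over Gorenstein rings.

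For the $\Ext$ statement I would use the classical fact that for an MCM module $M$ over a Gorenstein ring one has $\Ext^i_A(M, A) = 0$ for all $i \geq 1$ (since $A$ has self-injective dimension $d$). Induct on $p$: the case $p = 0$ is immediate because then $N$ is free. For $p \geq 1$, take a short exact sequence $0 \to K \to F \to N \to 0$ with $F$ free and $\projdim_A K = p - 1$. The long exact sequence of $\Ext_A(M, -)$, combined with the vanishing of $\Ext^i(M, F)$ for $i \geq 1$ and with the inductive vanishing of $\Ext^j(M, K)$ for $j \geq 1$, forces $\Ext^i(M, N) = 0$ for all $i \geq 1$.

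For $\Tor$ the naive induction on $p$ breaks down at $i = 1$. Instead I would use that MCM modules over Gorenstein rings are totally reflexive: applying $\Hom_A(-, A)$ to a free resolution of the dual module $M^\ast = \Hom_A(M, A)$ (which is again MCM), and exploiting $\Ext^i(M^\ast, A) = 0$ for $i \geq 1$ together with reflexivity $M^{\ast\ast} = M$, produces a short exact sequence $0 \to M \to F^0 \to L \to 0$ with $F^0$ free and $L$ again MCM. Iterating yields cosyzygy modules $M^{(-j)}$ for $j \geq 0$, each MCM, fitting into short exact sequences with free middle terms. Applying $- \otimes_A N$ to these and using $\Tor^A_i(F^{(j)}, N) = 0$ for $i \geq 1$, the resulting long exact sequences collapse to isomorphisms $\Tor^A_i(M, N) \cong \Tor^A_{i+j}(M^{(-j)}, N)$ for all $i \geq 1$ and $j \geq 0$. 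Taking $j = p$ kills the right-hand side, since $i + p > p = \projdim_A N$ forces $\Tor^A_n(-, N) = 0$ for $n > p$. The main point to verify carefully is the existence of cosyzygies with all the stated properties---in particular that each $M^{(-j)}$ remains MCM---but this is a standard consequence of reflexivity of MCM modules over Gorenstein rings together with $\Ext^i(M^\ast, A) = 0$ for $i \geq 1$, so the remainder is bookkeeping.
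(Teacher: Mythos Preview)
Your proof is correct. For the $\Tor$ statement you use essentially the same cosyzygy-shift argument as the paper, which writes it in a single line as
\[
\Tor^A_i(M,N) \;=\; \Tor^A_{i+d+1}(\Omega^{-d-1}M, N) \;=\; 0,
\]
shifting by $d+1$ (invoking $\projdim_A N \leq d$ via Auslander--Buchsbaum) rather than by $p$, and taking the existence of cosyzygies of MCM modules over a Gorenstein ring for granted, whereas you spell out the total-reflexivity construction in detail.

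For the $\Ext$ statement the paper says only that the argument is ``similar'', meaning the same cosyzygy shift $\Ext^i_A(M,N) \cong \Ext^{i+d+1}_A(\Omega^{-d-1}M,N)$ combined with the fact that $\operatorname{injdim}_A N \leq d$ (finite because $\projdim_A N < \infty$ over a Gorenstein ring). Your induction on $\projdim_A N$, with base case $\Ext^i_A(M,A) = 0$, is a genuinely different route. Your approach is slightly more elementary in that it never mentions injective dimension, while the paper's version has the virtue of handling $\Tor$ and $\Ext$ by one symmetric one-line trick.
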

\begin{proof}
We have
$$ \Tor^A_i(M, N) = \Tor^A_{i + d + 1}(\Omega^{-d -1}M, N) = 0.$$
The assertion for Ext is similar.
\end{proof}
Next we show:
\begin{proposition}\label{ext-x}
Let $(A, \m)$ be a hypersurface local ring of dimension $d$  and let $N$ be an $A$-module.
If $M \in \X(N)$ then $\ell(\Ext^i_A(M, N)) < \infty $ for all $i \geq 1$.
\end{proposition}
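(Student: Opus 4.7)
The strategy is to reduce the finite-length conclusion to a local vanishing statement. Because $M$ and $N$ are both finitely generated, $\Ext^i_A(M, N)$ is a finitely generated $A$-module for each $i$, and so it has finite length precisely when it is supported only at $\m$. Thus it suffices to prove, for every prime $P \neq \m$, that $\Ext^i_{A_P}(M_P, N_P) = 0$ for all $i \geq 1$.

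Fix such a $P$. The hypothesis $M \in \X(N)$ gives $\ell_A(\Tor^A_i(M,N)) < \infty$ for every $i \geq 1$, and localizing at $P$ yields $\Tor^{A_P}_i(M_P, N_P) = 0$ for every $i \geq 1$. The ring $A_P$ is a hypersurface (being a localization of one) and $M_P$ is a MCM $A_P$-module. The argument then splits into two cases. If $M_P$ is $A_P$-free, the desired Ext vanishing is immediate. Otherwise $\projdim_{A_P} M_P = \infty$, and at this point I would invoke the Huneke--Wiegand theorem for hypersurfaces (\cite{HW}): over a hypersurface ring $R$, $\Tor^R_i(U, V) = 0$ for all $i \gg 0$ forces either $\projdim_R U < \infty$ or $\projdim_R V < \infty$. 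Applied in our situation this forces $\projdim_{A_P} N_P < \infty$, and then Proposition \ref{pdimf} applied over $A_P$ gives $\Ext^i_{A_P}(M_P, N_P) = 0$ for all $i \geq 1$.

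Combining the two cases, $\Ext^i_A(M, N)_P = 0$ for every $P \neq \m$ and every $i \geq 1$, whence $\Ext^i_A(M, N)$ is supported at $\{\m\}$ and therefore of finite length. The only non-formal ingredient is the hypersurface dichotomy for eventual Tor vanishing used in the second case; the rest is a routine localization combined with Proposition \ref{pdimf}.
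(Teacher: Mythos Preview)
Your proposal is correct and follows essentially the same route as the paper: localize at a prime $P\neq\m$, use the hypothesis to get $\Tor^{A_P}_i(M_P,N_P)=0$ for all $i\geq 1$, invoke the Huneke--Wiegand dichotomy \cite[1.9]{HW} to force $M_P$ free or $\projdim_{A_P}N_P<\infty$ (or a trivial case), and then conclude Ext vanishing via Proposition~\ref{pdimf}. The paper's version simply states the three-way outcome of \cite[1.9]{HW} without splitting into cases, but the content is the same.
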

\begin{proof}
Let $P$ be a prime ideal of $A$ with $P \neq \m$. As $M \in \X(N)$ we have
$$\Tor^{A_P}_i(M_P, N_P) = \Tor^A_i(M, N)_P = 0,  \quad \text{for all} \ i \geq 1. $$
By \cite[1.9]{HW} (also see \cite[1.1]{M})  it follows that $M_P$ is free or $\projdim_{A_P} N_P < \infty$ or one among $M_P$ or $N_P$ is zero. By \ref{pdimf} it follows that $\Ext_{A_P}^i(M_P, N_P) = 0$ for $i \geq 1$.
The result follows.
\end{proof}
Next we give a proof of Proposition \ref{XNmax}. We restate it for the convenience of the reader.
\begin{proposition}\label{XNmax-body}
Let $(A, \m)$ be a hypersurface ring of dimension $d$ and let $N$ be a finitely generated $A$-module. The following assertions are equivalent:
\begin{enumerate}[\rm (1)]
  \item $\X(N) = \CMS(A)$.
  \item $\projdim_{A_P} N_P$ is finite for all primes $P \neq \m$.
\end{enumerate}
\end{proposition}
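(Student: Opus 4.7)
The plan is to prove the two implications separately: (2)$\Rightarrow$(1) is essentially formal from Proposition \ref{pdimf}, while (1)$\Rightarrow$(2) requires a small module-theoretic construction at a fixed prime.

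For (2)$\Rightarrow$(1), I would take any $M \in \CMS(A)$ and any prime $P \neq \m$. Then $A_P$ is Gorenstein and $M_P$ is either zero or MCM over $A_P$. Since $\projdim_{A_P} N_P < \infty$ by assumption, Proposition \ref{pdimf} applied over $A_P$ yields $\Tor^{A_P}_i(M_P, N_P) = 0$ for all $i \geq 1$. Hence $\Tor^A_i(M, N)_P = 0$ for every $P \neq \m$, so $\Tor^A_i(M, N)$ is supported only at $\m$, has finite length, and $M \in \X(N)$.

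For (1)$\Rightarrow$(2), fix a prime $P \neq \m$. If $A_P$ is regular then $\projdim_{A_P} N_P$ is automatically finite, so I may assume $A_P$ is singular. The main step is to produce a single MCM $A$-module $M$ whose localization $M_P$ is nonzero and non-free over $A_P$. I would take $M = \Omega^d_A(A/P)$: this is MCM as a $d$-th syzygy over the $d$-dimensional \CM \ ring $A$, and localizing a free resolution of $A/P$ exhibits $M_P$ as a $d$-th syzygy of $\kappa(P) = A_P/PA_P$ over $A_P$, up to a free summand (via Schanuel's lemma). Because $A_P$ is a non-regular local ring, $\kappa(P)$ has infinite projective dimension over $A_P$, so $M_P$ is neither zero nor free.

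With $M$ in hand, the hypothesis $\X(N) = \CMS(A)$ gives $\ell(\Tor^A_i(M,N)) < \infty$, hence $\Tor^{A_P}_i(M_P, N_P) = 0$ for all $i \geq 1$. Since $A_P$ is a localization of a hypersurface (and hence a complete intersection of codimension at most one), the Huneke--Wiegand rigidity result \cite[1.9]{HW}, applied exactly as in the proof of Proposition \ref{ext-x}, then forces $\projdim_{A_P} N_P < \infty$. The main obstacle I anticipate is the bookkeeping at the localization step: one must carefully confirm that $\Omega^d_A(A/P)$ really does localize to a non-free $A_P$-module and that the Huneke--Wiegand theorem is genuinely available in the localized ring $A_P$, but both should follow from standard facts about syzygies under localization and the fact that localizations of hypersurfaces remain hypersurfaces (or are regular).
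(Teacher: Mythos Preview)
Your proposal is correct and follows the same strategy as the paper: both implications rest on Proposition~\ref{pdimf} and Huneke--Wiegand rigidity applied over the localization $A_P$. The only difference is the choice of test module in (1)$\Rightarrow$(2): the paper takes $M = \Omega^d_A(N)$ rather than your $\Omega^d_A(A/P)$, which is marginally cleaner because the assumption $\projdim_{A_P} N_P = \infty$ directly forces $M_P$ to be nonzero of infinite projective dimension, so the separate treatment of the case ``$A_P$ regular'' becomes unnecessary.
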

\begin{proof}
The assertion (2) $\implies $ (1) follows from \ref{pdimf}.

We now show (1) $\implies $ (2). Suppose if possible $\projdim N_P = \infty $ for some prime $P \neq \m$. Set $M = \Omega^{d}(N)$. By our hypothesis $M \in \X(N)$. In particular
$$\Tor^{A_P}_i(M_P, N_P) = \Tor^A_i(M, N)_P = 0,  \quad \text{for all} \ i \geq 1. $$
By \cite[1.9]{HW} (also see \cite[1.1]{M}) it follows that $M_P$ is free or $\projdim_{A_P} N_P < \infty$ or one among $M_P$ or $N_P$ is zero. But $M_P, N_P$ are non-zero and of infinite projective dimension. This is a contradiction. The result follows.
\end{proof}

\s\label{ex-X} \emph{Examples:} We give a few examples such that $\X(N) = \CMS(A)$, equivalently  $\projdim_{A_P} N_P$ is finite for all primes $P \neq \m$.
\begin{enumerate}
\item If $\projdim N$ is finite then trivially $\X(N) = \CMS(A)$.
  \item  Let $N$ be a syzygy of a finite length module. Then $N_P$ is free for all $P \neq \m$. So  $\X(N) = \CMS(A)$.
  \item If $N \in \CMS^0(A)$ then by definition $N_P$ is free for all $P \neq \m$. So  $\X(N) = \CMS(A)$.
\end{enumerate}
\section{Proof of Theorem \ref{xi}}
In this section we give a proof of Theorem \ref{xi}. We will first need the following result.
\begin{lemma}\label{ann}
Let $(A,\m)$ be a hypersurface, $N$ a finitely generated $A$-module and $I$ an ideal in $A$ with $\ell(N/IN)$ finite. Let $M \in \X(N)$. Then there exists $a \geq 1$ with
$\m^a\Tor^A_i(M, N/I^{n}N) = 0$ for all $i \geq 1$ and all $n \geq 1$. Similarly there exists $b \geq 1$ with
$\m^b\Ext^i_A(M, N/I^{n}N) = 0$ for all $i \geq 1$ and all $n \geq 1$.
\end{lemma}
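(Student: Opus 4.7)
The plan is to reduce the lemma, via Eisenbud's periodicity, to finitely many cases in $i$, and then control each case with the Artin--Rees lemma applied inside a syzygy (or matrix-factorization) presentation of $M$.

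\textbf{Step 1 (periodicity reduction).} Free summands contribute nothing in $\Tor^A_{\geq 1}$ or $\Ext^{\geq 1}_A$, so I may assume $M$ has no free summand. Eisenbud's matrix-factorization theorem (already invoked in the proof of Lemma \ref{basic}) then yields $\Omega^2 M \cong M$, and hence
\[
\Tor^A_{i+2}(M, X) \cong \Tor^A_i(M, X), \qquad \Ext^{i+2}_A(M, X) \cong \Ext^i_A(M, X), \qquad i \geq 1,
\]
for any $A$-module $X$. Thus it suffices to produce uniform annihilators for $i = 1, 2$ in each of the two families.

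\textbf{Step 2 (base data).} Since $M \in \X(N)$, $\ell(\Tor^A_i(M,N)) < \infty$ for $i \geq 1$, and Proposition \ref{ext-x} gives the same for $\Ext^i_A(M,N)$. Periodicity collapses each family to at most two isomorphism classes, so some $a_0$ satisfies $\m^{a_0} \Tor^A_i(M,N) = 0 = \m^{a_0} \Ext^i_A(M,N)$ for every $i \geq 1$. The finiteness of $\ell(N/IN)$ forces $\sqrt{I + \ann_A N} = \m$, so I fix $e \geq 1$ with $\m^{e} \subseteq I + \ann_A N$. This $e$ will serve as the bridge between the $I$-adic and $\m$-adic topologies.

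\textbf{Step 3 (Tor computation).} Take the syzygy sequence $0 \to \Omega M \to F_0 \to M \to 0$ and tensor with $N$ to obtain a map $\psi \colon U := \Omega M \otimes N \to V := F_0 \otimes N$ with $\ker \psi = \Tor^A_1(M,N)$. Reducing further modulo $I^n$ identifies $\Tor^A_1(M, N/I^n N)$ with $\psi^{-1}(I^n V)/I^n U$, and an elementary computation shows that the cokernel of the natural map $\Tor^A_1(M,N) \to \Tor^A_1(M, N/I^n N)$ is isomorphic to $(\image \psi \cap I^n V)/I^n \image \psi$. The Artin--Rees lemma applied to $\image \psi \subseteq V$ yields $c$ with $\image \psi \cap I^n V \subseteq I^{n-c}\image \psi$ for $n \geq c$, so this cokernel is killed by $I^c$. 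Therefore $I^c \cdot \Tor^A_1(M, N/I^n N)$ lies in the image of $\Tor^A_1(M,N)$, which is killed by $\m^{a_0}$. Since $\ann_A N$ kills $N/I^n N$ and hence $\Tor^A_1(M, N/I^n N)$, and since $(I + \ann_A N)^c \subseteq I^c + \ann_A N$ and $\m^e \subseteq I + \ann_A N$, I conclude
\[
\m^{a_0 + ec} \cdot \Tor^A_1(M, N/I^n N) \;=\; 0 \qquad \text{for all } n \geq c.
\]
The finitely many $n < c$ give finite-length modules whose annihilators can be absorbed into the final $a$. Applying the same argument to $\Omega M$, which lies in $\X(N)$ by thickness, disposes of $\Tor^A_2(M, -) = \Tor^A_1(\Omega M, -)$.

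\textbf{Step 4 (Ext and main obstacle).} The matrix factorization $(\phi, \psi)$ of $M$ presents $\Ext^1_A(M, X)$ and $\Ext^2_A(M, X)$ as $\ker/\image$ inside $X^r$, and running the Artin--Rees argument on $\image \psi^t \subseteq N^r$ (respectively $\image \phi^t \subseteq N^r$) produces the analogous uniform bound $b$. The main obstacle is precisely the one negotiated in Step 2: Artin--Rees is an $I$-adic statement, while the conclusion is phrased in the $\m$-adic topology, and the two are reconciled only through the $\m$-primary ideal $I + \ann_A N$.
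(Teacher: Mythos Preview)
Your proof is correct. The paper takes a different but closely related route: rather than computing $\Tor^A_1(M, N/I^n N)$ explicitly via a syzygy presentation and invoking Artin--Rees, the paper assembles $L_1(M) = \bigoplus_{n \geq 1} \Tor^A_1(M, N/I^n N)$ as a graded module over the Rees algebra $\R(I) = A[It]$, tensors the exact sequence $0 \to \R(N) \to N[t] \to \bigoplus_{n \geq 1} N/I^n N \to 0$ with $M$, and reads off a sequence $\Tor^A_1(M, N)[t] \to L_1(M) \xrightarrow{f} \R(N) \otimes_A M$. The image of $f$ is a finitely generated $\R(I)$-submodule with finite-length graded components, so a single power $\m^r$ annihilates every component; combined with $\m^s \Tor^A_1(M, N) = 0$ this gives the bound. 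The Ext case is handled symmetrically by applying $\Hom_A(M, -)$ to the same Rees sequence, landing in $\Ext^2_A(M, \R(N))$. Your approach is more hands-on: you identify the cokernel of $\Tor^A_1(M,N) \to \Tor^A_1(M, N/I^n N)$ concretely as $(\image\psi \cap I^n V)/I^n\image\psi$ and kill it with Artin--Rees, then convert the resulting $I$-adic bound to an $\m$-adic one via $\m^e \subseteq I + \ann_A N$. The paper's packaging sidesteps this conversion entirely, since finite generation over $\R(I)$ together with finite length in each degree delivers $\m$-power annihilation directly. The two arguments rest on the same Noetherian fact (Artin--Rees is what makes $\R(I)$ Noetherian), so the difference is one of style: yours is more elementary and explicit, the paper's more conceptual and shorter.
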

\begin{proof}
As $M$ has period two it suffices to prove the result for $i = 1, 2$.

 Set $L_1(M) = \bigoplus_{n \geq 1}\Tor^A_1(M, N/I^{n}N)$. Let $\R(I) = A[It]$ be the Rees algebra of $I$.
Let $\R(N) = \bigoplus_{n \geq 0}I^nN$ be the Rees module of $N$. We have an exact sequence of $\R(I)$-modules
\begin{equation*}
 0 \rt \R(N) \rt N[t] \rt \bigoplus_{n \geq 1}N/I^nN \rt 0. \tag{*}
\end{equation*}
So we obtain a sequence of $\R(I)$-modules
\[
\Tor^A_1(M, N)[t] \rt L_1(M) \xrightarrow{f} \R(N)\otimes M.
\]
We note that $\R(N)\otimes M$ is a finitely generated $\R(I)$-module. So $D = \image(f)$ is also a finitely generated $\R(I)$-module. As $\ell(D_n)$ is finite for all $n \geq 1$ it follows that
there exists $r$ such that $\m^rD_n = 0$ for all $n \geq 1$. Also as $\Tor^A_1(M, N)$ has finite length we have $\m^s\Tor^A_1(M, N) = 0$. Take $l = r + s$. Then $\m^lL_1(M)_n = 0$ for all $n \geq 1$.

We note that $L_2(M) = \bigoplus_{n \geq 1}\Tor^A_2(M, N/I^{n}N) = L_1(\Omega(M))$. So an argument similar to above yields $t$ with $\m^tL_2(M)_n = 0$ for all $n \geq 1$. The result follows from taking $a$ to be maximum of $l$ and $t$.

Set $E^1(M) = \bigoplus_{n \geq 1}\Ext_A^1(M, N/I^{n}N)$. Applying $\Hom_A(M, -)$ to the short exact sequence $(*)$ we obtain an exact sequence of $\R(I)$-modules
\[
\Ext^1_A(M, N)[t] \rt E_1(M) \xrightarrow{g} \Ext^2_A(M, \R(N)).
\]
We note that  $\Ext^2_A(M, \R(N))$ is a finitely generated $\R(I)$-module. So $W = \image(g)$ is also a finitely generated $\R(I)$-module. As $\ell(W_n)$ is finite for all $n \geq 1$ it follows that
there exists $u$ such that $\m^uW_n = 0$ for all $n \geq 1$. Also as $\Ext^1_A(M, N)$ has finite length, see \ref{ext-x}, we have $\m^v\Ext_A^1(M, N) = 0$. Take $w = u + v$. Then $\m^wE^1(M)_n = 0$ for all $n \geq 1$.

We note that $E^2(M) = \bigoplus_{n \geq 1}\Ext^2_A(M, N/I^{n}N) = E^1(\Omega(M))$. So an argument similar to above yields $c$ with $\m^cE^2(M)_n = 0$ for all $n \geq 1$. The result follows from taking $b$ to be maximum of $w$ and $c$.
\end{proof}
\s We also need the following notion. Let $M \in \CMS(A)$. Let
\[
\Supp(M) = \{ P \mid  M_P \ \text{is not free} \ A_P-\text{module} \}.
\]
If $I$ is an ideal in $A$ then set $V(I) = \{ P \in \Spec(A) \mid P \supseteq I \}$.
It is readily verified that $\Supp(M) = V(\ann \sHom(M, M))$.

Next we give
\begin{proof}[Proof of Theorem \ref{xi}]
If $D$ is an $A$-module,
for $i \geq 1$, set \\ $L_i(D) = \bigoplus_{n \geq 1}\Tor^A_i(D, N/I^{n}N)$ a $\R(I)$-module.

We first show that  $\deg t_{I, N}(M,-) = r_{I, N}^0$ if $M \in \X(N)$ is non-free.
We prove this assertion by induction on $\dim \Supp(M)$. If $\dim \Supp(M) = 0$ then $M$ is free on $\Spec^0(A)$. In this case we have nothing to show.

Now assume $\dim \Supp(M) > 0$.
By Lemma \ref{ann} there exists $l$ such that \\ $\m^l L_i(M) = 0$ for all $i \geq 1$.

Let
$$x \in \m^l \setminus \bigcup_{ \stackrel{P \supseteq \ann \sHom(M, M)}{P \ \text{minimal}}} P.$$
Let $M \xrightarrow{x} M \rt U \rt \Om^{-1}(M)$ be a triangle in $\CMS(A)$. It is readily verified that support of  $\sHom(U, U)$ is contained in the intersection of support of $\sHom(M,M)$ and $M/x M$. So $\dim \Supp(U) \leq \dim \Supp(M) -1$. It is also not difficult to prove that $U$ is not a free $A$-module. As $\X(N)$ is thick we get $U \in \X(N)$. By induction hypotheses  $\deg t_{I, N}(U,-) = r_{I, N}^0$. By the structure of triangles in $\CMS(A)$, see \cite[4.4.1]{Buchw}, we have an exact sequence
$0 \rt G \rt U \rt M/xM \rt 0$ with $G$-free. It follows that $L_3(U) = L_3(M/xM)$. We also have an exact sequence
$0 \rt M \xrightarrow{x} M \rt M/xM \rt 0$. As $x \in \ann L_i(M)$ it follows that we have an exact sequence
$$ 0 \rt L_3(M) \rt L_3(M/xM) \rt L_2(M) \rt 0.$$
As the Hilbert function of $L_3(M)$ and $L_2(M)$ are identical, see  \ref{basic}(2), we get that
$2 t_{I, N}(M, -) = t_{I, N}(U,-)$. It follows that $\deg t_{I, N}(M,-) = r_{I, N}^0$. By induction the result follows.

Set $E^i(D) = \bigoplus_{n \geq 1}\Ext_A^i(D, N/I^{n}N)$ for $i \geq 1$. We note that $E^ i(D)$ is a $\R(I)$-module.

Next we show $\deg s_{I,N}(M, -) = s_{I, N}^0$ if $M \in \X(N)$ is non-free.
We prove this assertion by induction on $\dim \Supp(M)$. If $\dim \Supp(M) = 0$ then $M$ is free on $\Spec^0(A)$. In this case we have nothing to show.

Now assume $\dim \Supp(M) > 0$.
By \ref{ann} there exists $l$ such that $\m^l E^i(M) = 0$ for all  for all $i \geq 1$.
Let
$$x \in \m^l \setminus \bigcup_{ \stackrel{P \supseteq \ann \sHom(M, M)}{P \ \text{minimal}}} P.$$
Let $M \xrightarrow{x} M \rt U \rt \Om^{-1}(M)$ be a triangle in $\CMS(A)$. As before we have $\dim \Supp(U) \leq \dim \Supp(M) -1$ and $U$ is not free. As $\X(N)$ is thick we get $U \in \X(N)$. By induction hypotheses  $\deg s_{I,N}(U, -) = s_{I, N}^0$. By the structure of triangles in $\CMS(A)$, see \cite[4.4.1]{Buchw}, we have an exact sequence
$0 \rt G \rt U \rt M/xM \rt 0$ with $G$-free. It follows that $E^3(U) = E^3(M/xM)$. We also have an exact sequence
$0 \rt M \xrightarrow{x} M \rt M/xM \rt 0$. As $x \in \ann E^i(M)$ it follows that we have an exact sequence
$$ 0 \rt E^2(M) \rt E^3(M/xM) \rt E^3(M) \rt 0.$$
As the Hilbert function of $E^3(M)$ and $E^2(M)$ are identical, see \ref{basic}(2), we get that
$2  s_{I,N}(M, -) = s_{I,N}(U, -)$. It follows that $\deg s_{I,N}(M, -) = s_{I, N}^0$. By induction the result follows.
\end{proof}

\section{Proof of Theorem \ref{m2-gen} }
In this section we give a proof of Theorem \ref{m2-gen}. We first need to do some base change. We show that it suffices to assume $A$ is complete with infinite residue field.
\s\label{bc} We need to show that it suffices to assume $A$ is complete with infinite residue field. If the residue field of $A$ is finite then set $B = A[X]_{\m A[X]}$. The maximal ideal of $B$ is $\n = \m B$ and the residue field of $B$ is $l = k(X)$ is infinite. It is clear that $B$ is \CM \ of dimension $d = \dim A$. Also as $\n = \m B$ the maximal ideal of $B$ is generated by $d + 1$ elements.  So $B$ is also a hypersurface ring. Let $(C, \q)$ be the completion of $B$.  We note that we have a flat map $A \rt C$ with $\m C = \q$. We note that if $N$ is a perfect $A$-module then
$N\otimes_A C$ is a perfect $C$-module. If $M$ is a MCM $A$-module then $M\otimes_A C$ is a MCM $C$-module. Also it is clear that if $E$ is an $A$-module of finite length then $E\otimes_A C$ has finite length as a $C$-module and $\ell_A(E) = \ell_C(E\otimes_A C)$. It follows that if $M$ is a MCM $A$-module then
$e_{I,N}(M, -) = e_{IC,N\otimes_A C}(M\otimes_A C, -)$. Let $d = \dim A = \dim C$. If $D = \Omega^d_A(k)$ then $D \otimes_A C = \Omega^d_C(l)$. It follows that $e^0_{I, N} = e^0_{IC, N\otimes_A C}$. Thus we may assume $A$ is complete with infinite residue field.

\s\label{setup} Let $(A,\m)$ be a complete \CM \ local ring with infinite residue field $k$ and dimension $d$. We discuss module structures of the relevant modules. Fix $N$ a perfect $A$-module and an ideal $I \subseteq \m$ with
$\ell(N/IN)$ finite.
  If $M$ is a finitely generated $A$-module then set $$U(M) = \bigoplus_{n \geq 0}\Ext^{d+1}_A( N/I^{n+1}N, M).$$ Set $U(M)_n = \Ext^{d+1}_A(N/I^{n+1}N, M)$. We note that $U$ has a natural structure of a $\R(I)$-module. However if $at \in \R(I)_1$ then $atU(M)_n \subseteq U(M)_{n-1}$. \emph{To make  $U(M)$ as a graded $\R(I)$-module we consider $U(M)_n$ to sit in degree $-n$.} Let $E$ be the injective hull of $k = A/\m$ and if $D$ is an $A$-module set $D^\vee = \Hom_A(D, E)$. We note that if $\ell(D) <  \infty $ then $\ell(D^\vee) = \ell(D)$. Set $V(M)_n = U(M)_n^\vee$. Set
$$V(M) = \bigoplus_{n \geq 0} V(M)_n = \bigoplus_{n \geq 0}\Ext^{d+1}_A( N/I^{n+1}N, M)^\vee.$$
We set $\deg V(M)_n = n$. Note if $at \in \R(I)_1$ then $atV(M)_n \subseteq V(M)_{n+1}$.
So $V(M)$ is a graded $\R(I)$-module.
We first show:
\begin{theorem}\label{dual}(with hypotheses as in \ref{setup}. $V(M)$ is a finitely generated $\R(I)$-module.
\end{theorem}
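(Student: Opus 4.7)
The plan is to identify $V(M)$ with a hyper-Tor of the Rees module $\R(N)$ against the Matlis dual $M^\vee$ and then exploit perfectness of $N$ together with Noetherianity of $\R(I)$.

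First I would use Matlis duality to rewrite $V(M)$. For any finitely generated $A$-module $X$, a free $A$-resolution together with the identity $\Hom_A(A^r, M)^\vee \cong A^r \otimes_A M^\vee$ yields a natural isomorphism $\Ext^j_A(X, M)^\vee \cong \Tor^A_j(X, M^\vee)$. Applied with $X = N/I^{n+1}N$ and summed over $n$ so that the graded $\R(I)$-structures match, this gives $V(M) \cong \Tor^A_{d+1}\!\bigl(\bigoplus_{n\geq 0} N/I^{n+1}N,\; M^\vee\bigr)$ as graded $\R(I)$-modules. Next I would dispose of the trivial case $\dim N = 0$ (in which $I^kN = 0$ for $k \gg 0$, so $V(M)$ is concentrated in finitely many degrees and is trivially finitely generated). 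In the remaining case $g := \projdim_A N = d - \dim N \leq d - 1$, so $\Tor^A_j(N, -) = 0$ for $j \geq d$, and likewise for $N[t]$ by flatness of $A \to A[t]$. Feeding the short exact sequence of graded $\R(I)$-modules
\[
0 \to \R(N) \to N[t] \to \bigoplus_{n \geq 1} N/I^n N \to 0
\]
(the sequence $(\ast)$ used in the proof of Lemma~\ref{ann}) into the long exact $\Tor^A_\bullet(-, M^\vee)$ sequence then collapses it to give, after a degree shift, $V(M) \cong \Tor^A_d(\R(N), M^\vee)$ as graded $\R(I)$-modules.

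The remaining step is to show $\Tor^A_d(\R(N), M^\vee)$ is finitely generated over $\R(I)$. Since $N$ is perfect, I take a finite free $A$-resolution $P_\bullet \to N$ of length $g$; then $P_\bullet \otimes_A \R(I)$ is a bounded complex of finitely generated graded free $\R(I)$-modules, and its homologies $\Tor^A_\bullet(N, \R(I)) = \bigoplus_n \Tor^A_\bullet(N, I^n)$ are finitely generated graded $\R(I)$-modules. The snake-lemma sequence
\[
0 \to \bigoplus_n \Tor^A_1(N, A/I^n) \to N \otimes_A \R(I) \to \R(N) \to 0
\]
presents $\R(N)$ as a quotient of a finitely generated graded $\R(I)$-module by a finitely generated one. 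Combining these into a hyper-Tor spectral sequence for $(-) \otimes_A M^\vee$, and using the Matlis-adjoint identity above to rewrite each $E_2$-term as the Matlis dual of a finite-length Ext (whose Hilbert function is controlled by Theodorescu's Theorem~\ref{complex}), one identifies $V(M) \cong \Tor^A_d(\R(N), M^\vee)$ as a subquotient of finitely generated graded $\R(I)$-modules, hence finitely generated by Noetherianity of $\R(I)$.

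The main obstacle is this last step: since $M^\vee$ is only Artinian, not finitely generated, as an $A$-module, one must argue that $\Tor^A_d(-, M^\vee)$ applied to a finitely generated graded $\R(I)$-module again gives a finitely generated $\R(I)$-module. The saving device is the Matlis-adjoint formula, which converts each relevant Tor into the Matlis dual of a finite-length Ext; Theodorescu's polynomial-growth machinery combined with Noetherianity of $\R(I)$ then closes the argument.
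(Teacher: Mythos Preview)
Your Matlis-duality reduction is sound: the identity $\Ext^j_A(X,M)^\vee \cong \Tor^A_j(X,M^\vee)$ is correct, and feeding the Rees-module short exact sequence into the long exact $\Tor$ sequence (using $\projdim_A N \leq d-1$ when $\dim N \geq 1$) does give $V(M)\cong \Tor^A_d(\R(N),M^\vee)$ up to a harmless shift. The gap is the final step. You claim that the Matlis-adjoint formula together with Theodorescu's polynomial-growth result and Noetherianity of $\R(I)$ ``closes the argument'', but polynomial growth of the graded pieces is strictly weaker than finite generation over $\R(I)$: for instance $W=\bigoplus_{n\geq 0}k$ with $\R(I)_{+}$ acting trivially has $\ell(W_n)\equiv 1$ yet is not finitely generated. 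Your spectral-sequence sketch never actually exhibits $\Tor^A_d(\R(N),M^\vee)$ as a subquotient of a specific finitely generated $\R(I)$-module; the $E_2$-terms you allude to are again of the form $\Tor^A_\bullet(-,M^\vee)$ with $M^\vee$ Artinian, so you face the very problem you are trying to solve. In short, the argument is circular at the point where it matters.

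The paper sidesteps this entirely. It proves the statement by induction on $r=\dim N$: the case $r=0$ is trivial, and for $r\geq 1$ one chooses an $N$-superficial element $x\in I$, so that $\overline N=N/xN$ is perfect of dimension $r-1$ and, for $n\gg 0$, the sequence $0\to N/I^nN\xrightarrow{\cdot x}N/I^{n+1}N\to \overline N/I^{n+1}\overline N\to 0$ is exact. Dualizing the induced $\Ext^{d+1}$ sequence shows that $(V/xtV)_{n\gg 0}$ embeds into $\bigoplus_n \Ext^{d+1}_A(\overline N/I^{n+1}\overline N,M)^\vee$, which is finitely generated by induction. The key closing device is the elementary Lemma~\ref{Noeth}: if each $W_n$ has finite length and $W/aW$ is finitely generated for some $a\in\R(I)_1$, then $W$ itself is finitely generated. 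This is exactly what your approach is missing---a mechanism that upgrades control of a quotient (or of growth) to finite generation of $V(M)$ itself.
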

We will need the following result.
\begin{lemma}
\label{Noeth} Let $W = \bigoplus_{n \geq 0}W_n$ be a graded $\R(I)$-module with $\ell_A(W_n)$ finite for all $n \geq 0$. Let $a \in \R(I)_1$. If $W/aW$ is a finitely generated $\R(I)$-module then
$W$ is a finitely generated $\R(I)$-module.
\end{lemma}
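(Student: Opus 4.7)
The plan is a one–step graded Nakayama argument applied to the single element $a \in \R(I)_1$. Because $W/aW$ is a graded, finitely generated $\R(I)$-module, I can choose homogeneous generators $\bar{w}_1, \ldots, \bar{w}_r$ of $W/aW$ with $w_i \in W_{n_i}$. Let
\[
W' = \R(I)w_1 + \R(I)w_2 + \cdots + \R(I)w_r \subseteq W,
\]
a graded $\R(I)$-submodule. By the very choice of the $\bar{w}_i$, every element of $W$ reduces modulo $aW$ to an $\R(I)$-combination of the $\bar{w}_i$, and any such combination lifts into $W'$; hence $W = W' + aW$.

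Next I would pass to the quotient $T = W/W'$. The relation $W = W' + aW$ translates to $T = aT$. Now use that $T = \bigoplus_{n \geq 0} T_n$ is non-negatively graded and that $a$ is homogeneous of degree $+1$, so $a \cdot T_{n-1} \subseteq T_n$ for $n \geq 1$ while $(aT)_0 = 0$. The equality $T = aT$ therefore forces
\[
T_0 = 0, \qquad T_n = a\,T_{n-1} \quad (n \geq 1).
\]
An easy induction on $n$ gives $T_n = 0$ for every $n \geq 0$, so $T = 0$ and $W = W'$ is finitely generated over $\R(I)$, generated by $w_1, \ldots, w_r$.

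There is no real obstacle; the only thing to be careful about is the grading convention. In \ref{setup} the author has declared $\deg V(M)_n = n$, so that $a \in \R(I)_1$ raises degree by $+1$, which is what makes the induction step $(aT)_0 = 0$ work. The finite-length hypothesis on the $W_n$'s does not actually enter this argument; it matters only in the surrounding applications (where $W$ is built from $\Ext$ or $\Tor$ in each degree), and one could simply remark that graded Nakayama applied to the principal element $a$ is all that is needed.
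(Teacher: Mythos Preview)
Your proof is correct and is the standard graded Nakayama argument. The paper's proof is close in spirit but organizes things a little differently: rather than taking $W' = \sum_i \R(I)\,w_i$, the author sets $r = \max_i \deg w_i$ and lets $Y$ be the $\R(I)$-submodule generated by \emph{all} elements of $W$ in degrees $0,\ldots,r$; the hypothesis $\ell_A(W_n) < \infty$ is then invoked to see that $Y$ is finitely generated, and the equality $Y = W$ is shown by the same degree-by-degree induction you run on $T = W/W'$. Your choice of $W'$ is more economical, since $W'$ is visibly finitely generated without any length assumption, and you correctly note that the finite-length hypothesis plays no role in your argument. So the two proofs use the same inductive mechanism, but yours yields a slightly stronger (hypothesis-free) version of the lemma.
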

\begin{proof}
Let $\{ \ov{w_1}, \ldots, \ov{w_s}\}$ be a set of homogeneous generators of $W/aW$. Let $r = \max \{ \deg \ov{w_i} \mid 1 \leq i \leq s \}$. Set $Y = $ submodule of $W$ generated by elements of $W$ in degrees $0$ to $r$. Clearly $Y$ is a finitely generated submodule of $W$. It suffices to show $W = Y$. We prove $Y_n = W_n$ by induction on $n \geq 0$. For $0 \leq n \leq r$ we have nothing to prove. Let $n > r$ and assume $Y_j = W_j$ for $j < n$. Let $w \in W_n$. Then $\ov{w} \in W/aW$. Then $\ov{w} = \sum_{i = 1}^{s}\alpha_i\ov{w_i}$.
 Set $y  = \sum_{i = 1}^{s}\alpha_iw_i$. Note $y \in Y$ by our construction. We have $\ov{y} = \ov{w}$. So $y - w = a t$. Note $\deg t < n$. So $t \in Y$ by our induction hypothesis. Thus $w \in Y$. The result follows by induction.
\end{proof}
We now give
\begin{proof}[Proof of Theorem \ref{dual}] Set $V = V(M)$.
We note that $\ell(V_n) < \infty$ for all $n \geq 0$. We prove the result by induction on $r = \dim N$.

 If $r = 0$ then $I^n N = 0$ for $n \gg 0$. So $V_n = \Ext^{d+1}_A(N, M)^\vee = 0$ for $n \gg 0$ as $\projdim N < \infty$. The result follows in this case.

Assume $r \geq 1$ and the result is proved for perfect modules of dimension $r -1$. As $r \geq 1$ and $k$ is infinite there exists $x \in \R(I)_1$ which is $N$-superficial \ \wrt \ $I$. We note that as $\depth N > 0$ we have $(I^{n+1}N \colon x) = I^nN$ for $n \gg 0$; say for $n \geq n_0$.  Set $\ov{N} = N/xN$. Notice $\ov{N}$ is a perfect $A$-module of dimension $r - 1$. We have a natural exact sequence
\begin{align*}
  0 \rt \frac{(I^{n+1}N \colon x)}{I^n N} \rt \frac{N}{I^nN} &\xrightarrow{\alpha_n}  \frac{N}{I^{n+1}N} \rt  \frac{\ov{N}}{I^{n+1}\ov{N}} \rt 0\\
  \alpha_n(w + I^nN) &= xw + I^{n+1}N.
\end{align*}
We note that $\alpha_n$ is injective for $n \geq n_0$. Notice for $n \geq n_0$ we have an exact sequence
\[
\Ext^{d+1}_A(N/I^n N, M)^\vee \rt \Ext^{d+1}_A(N/I^{n+1} N, M)^\vee \rt \Ext^{d+1}_A(\ov{N}/I^{n+1} \ov{N}, M)^\vee.
\]
 Set $\ov{V} = V/xtV$. We note that we have an inclusion of $\R(I)$-modules
 \[
 \ov{V}_{n \geq n_0} \subseteq \bigoplus_{n\geq 0} \Ext^{d+1}_A(\ov{N}/I^{n+1} \ov{N}, M)^\vee.
 \]
 By induction hypothesis the latter module is a  finitely generated $\R(I)$-module. So $\ov{V}_{n \geq n_0}$ is finitely generated $\R(I)$-module. As $\ov{V}_i$ has finite length for all $i$, it follows that $\ov{V}$ is a finitely generated $\R(I)$-module. By Lemma \ref{Noeth}, it follows that $V$ is a finitely generated $\R(I)$-module.
\end{proof}
As a consequence we obtain:
\begin{corollary}\label{ann-e}(with  hypotheses as in \ref{setup}). We have
\begin{enumerate}[\rm (1)]
  \item There exists $s$ such that $\m^s U(M)_n = 0$ for all $n \geq 0$.
  \item The function $n \rt \ell(U(M)_n)$ is of polynomial type of degree $\leq \dim N - 1$.
\end{enumerate}
\end{corollary}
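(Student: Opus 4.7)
The plan is to derive both assertions from the fact, established in Theorem \ref{dual}, that $V(M)$ is finitely generated as a graded $\R(I)$-module.

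For (1), I would pick homogeneous generators $v_1,\ldots,v_t$ of $V(M)$ with $v_i \in V(M)_{d_i}$. Since $M$ is MCM and $N/I^{d_i+1}N$ has finite length, each $U(M)_{d_i}$ is of finite length, and so is its Matlis dual $V(M)_{d_i}$; hence $\m^{s_i}v_i = 0$ for some $s_i$. Setting $s = \max_i s_i$, any element of $V(M)_n$ has the form $\sum_i \alpha_i v_i$ with $\alpha_i \in \R(I)_{n-d_i}$, and commutativity of the $A$- and $\R(I)$-actions then gives $\m^s V(M)_n = 0$ for every $n$. Matlis duality preserves annihilators on finite-length modules ($\ann_A U = \ann_A U^\vee$), so this translates to $\m^s U(M)_n = 0$ for all $n$, establishing (1).

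For the polynomial-type part of (2), observe that $\m^s V(M) = 0$ makes $V(M)$ a finitely generated graded module over the Noetherian standard graded $A/\m^s$-algebra $B := \R(I)/\m^s\R(I)$ (generated in degree one), each of whose graded components is a module of finite length. Standard Hilbert--Serre theory then yields that $n \mapsto \ell_A(V(M)_n) = \ell_A(U(M)_n)$ is of polynomial type.

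The degree bound $\leq \dim N - 1$ is obtained by induction on $r = \dim N$. When $r = 0$, the perfect $A$-module $N$ has $\projdim_A N = d$, so $\Ext^{d+1}_A(N, M) = 0$; since $I \subseteq \m$, Nakayama forces $I^{n+1}N = 0$ for $n \gg 0$, whence $U(M)_n = 0$ eventually and the polynomial has degree $-1 = r - 1$. For $r \geq 1$, pick an $N$-superficial $x \in I$ (available since $k$ is infinite by \ref{setup}); then $\ov N := N/xN$ is perfect of dimension $r - 1$, using that $x$ is $N$-regular ($\depth N > 0$) and $\projdim_A \ov N = d - r + 1 = \grade \ov N$. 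As shown in the proof of Theorem \ref{dual}, $V(M)/xt\cdot V(M)$ embeds in high degrees into $\bigoplus_n \Ext^{d+1}_A(\ov N/I^{n+1}\ov N, M)^\vee$, whose Hilbert function has polynomial degree $\leq r-2$ by induction. The surjection $V(M)_{n-1} \twoheadrightarrow xt\cdot V(M)_{n-1}$ gives $\ell(V(M)_n) \leq \ell(V(M)_{n-1}) + \ell((V(M)/xtV(M))_n)$, so the first difference of the eventually-polynomial function $\ell(V(M)_n)$ is majorized by a polynomial of degree $\leq r - 2$, forcing $\ell(V(M)_n)$ itself to have polynomial degree $\leq r - 1 = \dim N - 1$. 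The main obstacle is this degree bound; (1) and the polynomial-type assertion follow almost formally from Theorem \ref{dual}, whereas the degree estimate genuinely uses the inductive passage through a superficial element together with the embedding of $V(M)/xtV(M)$ into the $\ov N$-version of the same module.
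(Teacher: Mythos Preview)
Your proof of (1) is essentially identical to the paper's: finite generation of $V(M)$ plus finite length of the graded pieces gives a uniform $\m^s$ killing $V(M)$, and Matlis duality transfers this to $U(M)$.

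For (2) your argument is correct, but the paper takes a shorter route. You establish polynomial type via $\R(I)/\m^s\R(I)$ and then prove the degree bound by induction on $\dim N$, re-running the superficial-element argument from the proof of Theorem~\ref{dual} to bound the first difference of $\ell(V(M)_n)$. The paper instead makes a single observation that avoids induction entirely: since $\ann_A(N)$ annihilates each $N/I^{n+1}N$, it annihilates $U(M)_n$ and hence $V(M)_n$; therefore $V(M)$ is already a finitely generated graded module over $B[It]$ with $B=A/\ann(N)$. The hypothesis $\ell(N/IN)<\infty$ forces $IB$ to be $\m_B$-primary, so standard Hilbert theory for finitely generated graded modules (with finite-length components) over the Rees algebra of an $\m_B$-primary ideal immediately gives polynomial type of degree $\le \dim B-1$, and $\dim B=\dim N$ since $N$ is a faithful $B$-module. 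What your approach buys is independence from that standard fact---you reduce everything to the $\dim N=0$ case by hand---at the cost of length; the paper's approach is a two-line reduction to a known result once one notices that $\ann(N)$ acts trivially.
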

\begin{proof}
  (1) As $V(M)$ is a finitely generated $\R(I)$-module and as $\ell(V(M))_n$ has finite length for all $n \geq 0$ it follows that there exists $s$  such that $\m^s V(M)_n = 0$ for all $n \geq 0$.
  The result follows as $\ann U(M)_n = \ann V(M)_n$ for all $n \geq 0$.

  (2) Let $B = A/\ann(N)$. Then $IB$ is primary to the maximal ideal of $B$. We note that $V(M)$ is a finitely generated $B[It]$-module. It follows that $n \rt \ell(V_n)$ is of polynomial type of degree $\leq \dim B - 1$. As $N$ is a faithful $B$-module we have $\dim B = \dim N$. The result follows as $\ell(U(M)_n) = \ell(V(M)_n)$.
\end{proof}
We now give proof of Theorem \ref{m2-gen}. We restate it for the convenience of the reader.
\begin{theorem}\label{m2-gen-body}
Let $(A,\m)$ be a hypersurface ring of dimension $d$. Let $N$ be a perfect $A$-module and let $I$ be an ideal in $A$ with $\ell(N/IN)$ finite. Then
 there exists integer $e_{I, N}^0$  ($\geq -1$) depending only on $I$ and $N$ such that if $M \in \CMS(A)$ is  non-free then
$\deg e_{I,N}(M, -) = e_{I, N}^0$.
\end{theorem}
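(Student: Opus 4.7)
The plan is to imitate the induction used in the proof of Theorem \ref{xi}, but with Corollary \ref{ann-e}(1) replacing Lemma \ref{ann}. The decisive advantage is that \ref{ann-e}(1) --- which relies on perfectness of $N$ via Theorem \ref{dual} --- delivers, for \emph{every} MCM module $M$, an integer $s$ with $\m^s\Ext^{d+1}_A(N/I^{n+1}N, M) = 0$ for all $n \geq 0$. This removes the $\X(N)$-restriction that was needed in Theorem \ref{xi}. First, by \ref{bc} we reduce to $A$ complete with infinite residue field. Next, define $e_{I,N}^0$ as the common degree provided by Theorem \ref{m2}; since $e_{I,N}(L \oplus \Om(L), n) = 2\,e_{I,N}(L, n)$ by \ref{basic}(2), this is $\deg e_{I,N}(L, -)$ for every non-free $L \in \CMS^0(A)$.

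I will then induct on $\dim \Supp(M)$ for non-free $M \in \CMS(A)$. The base case $\dim \Supp(M) = 0$ is exactly $M \in \CMS^0(A)$, already handled. For the inductive step, apply Corollary \ref{ann-e}(1) to $M$ together with the dimension shift obtained from $0 \rt M \rt F \rt \Om^{-1}(M) \rt 0$ (with $F$ free) and the Gorenstein vanishing $\Ext^j_A(C, F) = 0$ for $C$ of finite length, $F$ free and $j \neq d$. This chain of identifications gives $\Ext^{d+2}_A(N/I^{n+1}N, M) \cong \Ext^{d+1}_A(N/I^{n+1}N, \Om^{-1}(M))$, whose length equals $e_{I,N}(\Om^{-1}(M), n) = e_{I,N}(\Om(M), n) = e_{I,N}(M, n)$ by period-two periodicity, \ref{basic}(1) and \ref{basic}(2). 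Consequently a single $s$ annihilates both $\Ext^{d+1}_A(N/I^{n+1}N, M)$ and $\Ext^{d+2}_A(N/I^{n+1}N, M)$ for all $n \geq 0$.

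Next, pick $x \in \m^s$ outside the minimal primes of $\ann \sHom(M, M)$ and form the triangle $M \xrightarrow{x} M \rt U \rt \Om^{-1}(M)$ in $\CMS(A)$. The support and non-freeness arguments from Theorem \ref{xi} go through verbatim: $\dim \Supp(U) \leq \dim \Supp(M) - 1$ and $U$ is non-free. From the Buchweitz sequence $0 \rt G \rt U \rt M/xM \rt 0$ with $G$ free, the Gorenstein vanishing yields $e_{I,N}(U, n) = \ell(\Ext^{d+1}_A(N/I^{n+1}N, M/xM))$. Applying $\Hom_A(N/I^{n+1}N, -)$ to $0 \rt M \xrightarrow{x} M \rt M/xM \rt 0$ and using that $x$ kills the bracketing $\Ext^{d+1}$ and $\Ext^{d+2}$ of $M$ produces a short exact sequence whose outer terms both have length $e_{I,N}(M, n)$, so $e_{I,N}(U, n) = 2\,e_{I,N}(M, n)$. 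The induction hypothesis gives $\deg e_{I,N}(U, -) = e_{I,N}^0$ and the result follows.

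The main obstacle is the identity $\ell(\Ext^{d+2}_A(N/I^{n+1}N, M)) = e_{I,N}(M, n)$, which is what collapses the long exact sequence into the clean doubling relation $e_{I,N}(U, n) = 2\,e_{I,N}(M, n)$ and also lets the same constant $s$ annihilate two consecutive Ext groups. It requires a coordinated use of three inputs --- the hypersurface structure (period-two syzygies, so $\Om^{-1}(M)$ and $\Om(M)$ coincide in $\CMS(A)$), the Gorenstein vanishing $\Ext^j_A(\text{finite length}, \text{free}) = 0$ for $j \neq d$, and Lemma \ref{basic}(2). The remaining machinery --- the triangle construction, the induction on $\dim \Supp(M)$, and the thick-subcategory-style localization away from $\ann \sHom(M,M)$ --- is a direct transcription of the Theorem \ref{xi} argument, with $\CMS(A)$ now in place of $\X(N)$.
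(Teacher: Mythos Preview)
Your proposal is correct and follows essentially the same route as the paper: induction on $\dim \Supp(M)$ with base case $\CMS^0(A)$ handled by Theorem~\ref{m2}, annihilator control from Corollary~\ref{ann-e}(1) combined with the dimension shift $\Ext^{d+2}_A(-,M) \cong \Ext^{d+1}_A(-,\Omega^{-1}(M))$, the triangle $M \xrightarrow{x} M \rt U \rt \Omega^{-1}(M)$, and the doubling relation $e_{I,N}(U,n) = 2\,e_{I,N}(M,n)$ via \ref{basic}(2). One small expository point: your ``Consequently a single $s$ annihilates both'' does not follow from the length identity alone --- you need to invoke \ref{ann-e}(1) separately for $M$ and for $\Omega^{-1}(M)$ and take the maximum, which is what the paper does (phrased there as $\m^l U^i(M)=0$ for all $i \geq d+1$ after noting $U^{d+j}(M)=U^{d+j-1}(\Omega(M))$).
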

\begin{proof}
 If $D$ is a finitely generated $A$-module then set $$U^i(D) = \bigoplus_{n \geq 0}\Ext^{i}_A( N/I^{n+1}N, M).$$
We prove our result by induction on $\dim \Supp(M)$. If $\dim \Supp(M) = 0$ then $M$ is free on $\Spec^0(A)$. In this case we have nothing to show.

Now assume $\dim \Supp(M) > 0$.
We note that for $j \geq 2$,  $$U^{d+j}(M) = U^{d+j-1}(\Omega(M)).$$
By Lemma \ref{ann-e} there exists $l$ such that  $\m^l U^i(M) = 0$ for all $i \geq d+1$.

Let
$$x \in \m^l \setminus \bigcup_{ \stackrel{P \supseteq \ann \sHom(M, M)}{P \ \text{minimal}}} P.$$
Let $M \xrightarrow{x} M \rt W \rt \Om^{-1}(M)$ be a triangle in $\CMS(A)$. It is readily verified that support of  $\sHom(W, W)$ is contained in the intersection of support of $\sHom(M,M)$ and $M/x M$. So $\dim \Supp(W) \leq \dim \Supp(M) -1$. It is also not difficult to prove that $W$ is not free $A$-module.  By induction hypotheses  $\deg e_{I, N}(W,-) = e_{I, N}^0$. By the structure of triangles in $\CMS(A)$, see \cite[4.4.1]{Buchw}, we have an exact sequence
$0 \rt G \rt W \rt M/xM \rt 0$ with $G$-free. It follows that $U^{d+1}(W) = U^{d+1}(M/xM)$. We also have an exact sequence
$0 \rt M \xrightarrow{x} M \rt M/xM \rt 0$. As $x \in \ann U^i(M)$ for $i \geq d +1$ it follows that we have an exact sequence
$$ 0 \rt U^{d+1}(M) \rt U^{d+1}(M/xM) \rt U^{d+2}(M) \rt 0.$$
As the Hilbert function of $U^{d+1}(M)$ and $U^{d+2}(M)$ are identical, see  \ref{basic}(2), we get that
$2 e_{I, N}(M, -) = e_{I, N}(W,-)$. It follows that $\deg e_{I, N}(M,-) = e_{I, N}^0$. By induction the result follows.
\end{proof}
\section{proof of Theorem \ref{m3-gen}}
In this section we give a proof of Theorem \ref{m3-gen}. To prove the result note that we may assume the residue field $k$ of $A$ is infinite (otherwise take the extension $A[t]_{\m A[t]}$ of $A$). We also assume $A$ is complete.

\s Recall $x \in I$ is $M$-superficial with respect to $I$ if there exists $c$ such that $(I^{n+1}M \colon x) \cap I^cM = I^nM$ for all $n \gg 0$. Assume $\ell(M/IM)$ is finite. If $\depth M > 0$ then it follows that $x$ is $M$-regular and $(I^{n+1}M \colon x) = I^nM$ for $n \gg 0$. If $k$ is infinite then $I$-superficial elements with respect to $M$ exists. In fact in this case there exists a non-empty open set $U_M$ in the Zariski topology of $I/\m I$ such that if the image of $x$ is in $U_M$ then $x$ is $I$-superficial with respect to $M$.

\s Let $E = \bigoplus_{n \geq 0}E_n$ be a finitely generated graded module over the Rees algebra $\R= A[It]$. Assume $E_n$ has finite length for all $n$. There exists $xt \in \R_1$ such that $xt$ is $E$-filter regular, i.e.,  $(0 \colon_E xt)_n = 0$ for $n \gg 0$. In fact in this case there exists a non-empty open set $U_E$ in the Zariski topology of $I/\m I$ such that if the image of $x$ is in $U_E$ then $xt$ is $E$-filter regular.

We will need the following result:
\begin{lemma}\label{perf-tensor}
Let $(A,\m)$ be a Gorenstein local ring and let $N$ be a perfect module of dimension $r$. Let $M$ be a MCM $A$-module. Then $M\otimes N$ is \CM \ $A$-module of dimension $r$.
\end{lemma}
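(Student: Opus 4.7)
The plan is to reduce to a \CM \ quotient $B$ of $A$ over which $N$ becomes free; the Tor-vanishing of $M$ against $N$ and the \CM \ property of $M \otimes N$ will then follow from standard base change, together with the fact that a regular sequence in $\m$ is automatically regular on any MCM module over the \CM \ ring $A$.

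Concretely, set $g = d - r$. Since $A$ is Gorenstein (hence \CM) and $N$ is perfect of dimension $r$, $\projdim_A N = \grade N = g$, and $\grade(\ann N) = g$. So there exists an $A$-regular sequence $\mathbf{a} = a_1, \ldots, a_g$ contained in $\ann N$. Set $B = A/(\mathbf{a})$; this is a \CM \ local ring of dimension $r$, and $N$ is canonically a $B$-module. The first key step is the standard change-of-rings formula for regular quotients,
\[
\projdim_A N \;=\; \projdim_B N + g.
\]
Since $\projdim_A N = g$, this forces $\projdim_B N = 0$, so $N$ is $B$-free, say $N \cong B^{n}$ with $n \geq 1$.

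Next, $M$ is MCM over the \CM \ local ring $A$, so $\Ass M$ consists of minimal primes of $A$; hence any element of $\m$ avoiding those primes (in particular every $A$-regular element) is $M$-regular, and inductively $\mathbf{a}$ is $M$-regular. Consequently the Koszul complex on $\mathbf{a}$ shows that $B \otimes^L_A M \simeq M/\mathbf{a} M$ in the derived category. Base change along $A \to B$ then gives
\[
N \otimes^L_A M \;\simeq\; N \otimes^L_B (B \otimes^L_A M) \;\simeq\; N \otimes_B (M/\mathbf{a} M) \;\cong\; (M/\mathbf{a} M)^{n},
\]
the middle isomorphism holding because $N$ is $B$-flat. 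Reading off the homology yields $\Tor^A_i(M, N) = 0$ for all $i \geq 1$ and $M \otimes_A N \cong (M/\mathbf{a} M)^{n}$.

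Finally, since $\mathbf{a}$ is $M$-regular of length $g$, $M/\mathbf{a} M$ is MCM over $B$ (its $B$-depth equals $d - g = r = \dim B$), so it is \CM \ of dimension $r$; a finite direct sum of copies remains \CM \ of dimension $r$, and these invariants coincide whether we view the module over $A$ or over $B = A/(\mathbf{a})$. Thus $M \otimes N$ is \CM \ of dimension $r$, as required. The main obstacle I expect is the freeness of $N$ over $B$, which rests on the change-of-rings formula for projective dimension over a quotient by a regular sequence; this is standard but should be cited carefully, e.g.\ to Bruns--Herzog.
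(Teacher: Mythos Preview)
Your proof is correct, but the route differs from the paper's. The paper argues by induction on $r=\dim N$: for $r\geq 1$ it picks an $N$-regular element $x$, notes $N/xN$ is again perfect, invokes Proposition~\ref{pdimf} (the already-established Tor-vanishing of an MCM module against any module of finite projective dimension) to see $\Tor^A_1(M,N/xN)=0$, and concludes from the short exact sequence $0\to N\xrightarrow{x}N\to N/xN\to 0$ that $x$ is $M\otimes N$-regular with $M\otimes(N/xN)$ \CM\ of dimension $r-1$ by induction. You instead make a single reduction: choose a maximal $A$-regular sequence $\mathbf{a}$ in $\ann N$, pass to $B=A/(\mathbf{a})$, and use the change-of-rings formula $\projdim_A N=\projdim_B N+g$ to force $N$ to be $B$-free; derived base change then gives $M\otimes_A N\cong (M/\mathbf{a}M)^n$ together with the Tor-vanishing as a byproduct. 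Your argument is self-contained (it does not need Proposition~\ref{pdimf}) and yields the pleasant explicit description of $M\otimes N$, at the cost of importing the change-of-rings identity and a bit of derived-category language; the paper's inductive argument is more elementary and stays entirely within the tools already developed in the paper.
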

\begin{proof}
We note that if $N \neq 0$ then $M\otimes N \neq 0$. We prove the result by induction on $r = \dim N$. When $r = 0$ then we have nothing to prove.
Assume $r \geq 1$. Let $x\in \m$ be $N$-regular. Then $N/xN$ is a perfect $A$-module. By \ref{pdimf} we have $\Tor^A_1(M, N/xN) = 0$. The exact sequence
$0 \rt N \xrightarrow{x} N \rt N/xN \rt 0$ yields an exact sequence
$$0 = \Tor^A_1(M, N/xN) \rt M\otimes N \xrightarrow{x} M\otimes N  \rt M\otimes (N/xN) \rt 0. $$
So $x$ is $M\otimes N$-regular. Also $M\otimes N/xN$ is \CM \ of dimension $r-1$ by induction hypothesis. The result follows.
\end{proof}
\s \label{fin-3thm} Assume $A$ is Gorenstein, $N$ is a perfect $A$-module and $I$ is an ideal of $A$ with $\ell(N/IN)$ finite. Let $M$ be a MCM $A$-module.
For $i \geq 1$
set $L_i^N(M) = \bigoplus_{n \geq 1}\Tor^A_i(M, N/I^{n}N)$ and $E^i_N(M) = \bigoplus_{n \geq 1}\Ext_A^i(M, N/I^{n}N)$.
\begin{proposition}\label{rachel}
(with hypotheses as in \ref{fin-3thm}) For $i \geq 1$, $L_i^N(M)$ and $E^i_N(M)$ are finitely generated $\R(I)$-modules.
\end{proposition}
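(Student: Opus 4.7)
The plan is to apply the long exact sequences of $\Tor$ and $\Ext$ against $M$ to the short exact sequence of graded $\R(I)$-modules
$$0 \rt \R(N) \rt N[t] \rt \bigoplus_{n \geq 1}N/I^n N \rt 0 \qquad (\ast)$$
introduced in the proof of Lemma \ref{ann}, and to exploit the vanishing supplied by Proposition \ref{pdimf}. Since $N$ is perfect it has finite projective dimension, so $M$ being MCM over the Gorenstein ring $A$ gives $\Tor^A_i(M,N) = 0$ and $\Ext^i_A(M,N) = 0$ for every $i \geq 1$ by \ref{pdimf}. Because $N[t]$ is a direct sum of copies of $N$ as an $A$-module and $M$ is finitely presented, the same vanishing holds with $N$ replaced by $N[t]$.

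Feeding $(\ast)$ into the long exact sequences and using this vanishing forces them to collapse and produce the following identifications as graded $\R(I)$-modules:
$$L_i^N(M) \cong \Tor^A_{i-1}(M,\R(N)) \text{ for } i \geq 2, \quad L_1^N(M) \hookrightarrow M\otimes_A \R(N),$$
and
$$E^i_N(M) \cong \Ext^{i+1}_A(M,\R(N)) \text{ for } i \geq 1.$$
Thus the proposition will follow once we verify that $\Tor^A_j(M, \R(N))$ and $\Ext^j_A(M,\R(N))$ are finitely generated graded $\R(I)$-modules for every $j \geq 0$.

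To check this, I would fix a resolution $P_\bullet \rt M$ with each $P_j$ a finitely generated free $A$-module. Then $P_j \otimes_A \R(N)$ and $\Hom_A(P_j,\R(N))$ are each isomorphic to a finite direct sum of copies of $\R(N)$. The Rees module $\R(N)$ is a finitely generated $\R(I)$-module by a standard fact, and the differentials of the two complexes above are induced by $A$-linear maps of finitely generated free modules, hence are $\R(I)$-linear of degree zero. Therefore the homology and cohomology of these complexes are subquotients of finitely generated $\R(I)$-modules, and since $\R(I)$ is Noetherian they are themselves finitely generated.

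The main conceptual point is the vanishing of $\Tor^A_i(M,N)$ and $\Ext^i_A(M,N)$ for $i \geq 1$, which lets us trade the unwieldy module $\bigoplus_{n \geq 1}N/I^n N$ (which is not finitely generated over $\R(I)$) for the Rees module $\R(N)$ (which is); once this trade is made, the finiteness argument is routine. I expect this vanishing step to be the crux; everything else reduces to bookkeeping over the Noetherian algebra $\R(I)$.
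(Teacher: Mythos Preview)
Your proof is correct and follows essentially the same route as the paper: both use the Rees exact sequence $(\ast)$, invoke \ref{pdimf} to kill $\Tor^A_i(M,N)$ and $\Ext^i_A(M,N)$ for $i\geq 1$, and thereby identify $L_i^N(M)$ and $E^i_N(M)$ with (sub)modules of $\Tor$ and $\Ext$ against the finitely generated $\R(I)$-module $\R(N)$. Your write-up is slightly more explicit in justifying the vanishing over $N[t]$ (via finite presentation of $M$) and in spelling out why $\Tor^A_j(M,\R(N))$ and $\Ext^j_A(M,\R(N))$ are finitely generated over $\R(I)$, but there is no substantive difference in strategy.
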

\begin{proof}
Set $L_i(M) = L_i^N(M)$ and $E_N^i(M) = E^i(M)$.
  Set $W = \bigoplus_{n \geq 1}N/I^{n}N$. Let $\R_I(N) = \bigoplus_{n \geq 0}I^nN$ be the Rees module of $N$ \wrt \ $I$. Clearly $\R_I(N)$ is a finitely generated $\R(I)$-module. We also note that $N[t] = A[t]\otimes N$ is also a $\R(I)$-module (not finitely generated as a $\R(I)$-module). The exact sequence
  $$ 0 \rt \R_I(N) \rt N[t] \rt W \rt 0$$
  yields an $\R(I)$-module structure on  $W$. As $L_i(M) = \Tor^A_i(M, W)$ and $E^i(M) = \Ext_A^i(M, W)$ for $i \geq 1$; it follows that  $L_i(M)$ and $E^i(M)$ are $\R(I)$-modules.

  Tensor the above exact sequence with $M$. As $\Tor^A_i(M, N) = 0 $ for $i \geq 1$, see \ref{pdimf},  we obtain   for $i \geq 2$,  $L_i(M) = \Tor^A_{i-1}(M, \R_I(N))$ and $L_1(M)$ is a $\R(I)$-submodule of $M\otimes \R_I(N)$. It follows that
  $L_i(M)$  are finitely generated $\R(I)$-modules for $i \geq 1$.

  We apply $\Hom_A(M, -)$ to the above exact sequence. As $\Ext^i_A(M, N) = 0$ for $i \geq 1$ we obtain $E^i(M) = \Ext^{i+1}_A(M, \R_I(N))$. It follows that
  $E^i(M)$ are finitely generated $\R(I)$-modules for $i \geq 1$.
\end{proof}
\s\label{mod-TE} Assume $r = \dim N  \geq 2$ and assume $M$ is a MCM $A$-module. Also assume $A$ is Gorenstein and $N$ is perfect $A$-module.  Let $x \in I$ be such that it is $N\oplus (M\otimes N)$-superficial and  $xt$ is $L_1^N(M)$, $E^i_N(M)$-filter regular for $i = 1, 2$. We note that $(I^{n+1}N \colon x) = I^nN$ and $(I^{n+1}(M \otimes N) \colon x) = I^n(M \otimes N)$ for $n \gg 0$ (here we are using
$M\otimes N$ is \CM \ see \ref{perf-tensor}). We note that $\ov{N} = N/xN$ is a perfect $A$-module.
We have an exact sequence for $n \geq 1$
\[
 0 \rt \ker \alpha_n \rt N/I^nN \xrightarrow{\alpha_n} N/I^{n+1}N \rt \ov{N}/I^{n+1}\ov{N} \rt 0,
\]
where $\alpha_n(a + I^n) = xa + I^{n+1}$. We note that $\ker \alpha_n  = (I^{n+1}N \colon x)/I^nN = 0$ for $n \gg 0$.  Thus for $n \gg 0$ we have an exact sequence
\begin{align*}
  \Tor^A_1(M, N/I^nN) &\xrightarrow{\alpha^1_n} \Tor^A_1(M, N/I^{n+1}N) \rt \Tor^A_1(M, \ov{N}/J^{n+1}\ov{N}) \rt \\
 M\otimes N/I^n(M\otimes N) &\xrightarrow{\alpha^0_n}  M\otimes N/I^{n+1}(M\otimes N) \rt \cdots.
\end{align*}
We note that $\ker \alpha^0_n = (I^{n+1}(M\otimes N) \colon x)/I^n(M\otimes N) = 0$ for $n \gg 0$. Furthermore the map $\alpha^1_n$ is the $n^{th}$-component of the multiplication map by $xt \in \R_1$ on $L_1^N(M)$. As $xt$ is $L_1^N(M)$-filter regular it follows that $\ker \alpha^1_n = 0$ for $n \gg 0$. Thus for $n \gg 0$ we have an exact sequence
\s\label{t}
 $$0 \rt  \Tor^A_1(M, N/I^nN) \xrightarrow{\alpha^1_n} \Tor^A_1(M, N/I^{n+1}N) \rt \Tor^A_1(M, \ov{N}/J^{n+1}\ov{N}) \rt 0.$$

Similarly as $xt$ is also $E^1_N(M)$ and $E^2_N(M)$-filter regular we have an exact sequence for $n \gg 0$
\s\label{e}
$$0 \rt  \Ext^1_A(M, N/I^nN) \xrightarrow{\beta^1_n} \Ext_A^1(M, N/I^{n+1}N) \rt \Ext_A^1(M, \ov{N}/I^{n+1}\ov{N}) \rt 0.$$

\s\label{raki}
Assume  $(A,\m)$ is a complete hypersurface ring with an infinite residue field $k$. Let $N$ be a perfect $A$-module and $I \subseteq \m$ is an ideal of $A$ with $\ell(N/IN)$-finite. Let
$M$ be a MCM $A$-module. Set  $$U^i_N(M) = \bigoplus_{n \geq 0}\Ext^{i}_A(N/I^{n+1}N, M) \  \ \text{for $i \geq d +1$}.$$ Also set $(U^i_N(M))_n  = \Ext^{i}_A(N/I^{n+1}N, M)$. Let $E = E(k)$
 Let $E$ be the injective hull of $k = A/\m$ and if $D$ is an $A$-module set $D^\vee = \Hom_A(D, E)$. We note that if $\ell(D) <  \infty $ then  $\ell(D^\vee) = \ell(D)$. Set $(V^i_N(M))_n = (U(M)^i_N)_n^\vee$. Set for $i \geq d +1$
$$V^i_N(M) = \bigoplus_{n \geq 0}(V^i_N(M))_n = \bigoplus_{n \geq 0}\Ext^{i}_A( N/I^{n+1}N, M)^\vee.$$
By \ref{dual} we get that $V^i_N(M)$ is a finitely generated $\R(I)$-module.

\s\label{usha} (with hypotheses as in \ref{raki}). Assume $\dim N \geq 2$. Let $x$ be $N$-superficial \wrt \ $I$. Also assume that $xt$ is $V^{d+1}_N(M) \oplus V^{d+2}_N(M)$-filter regular.
 We note that $(I^{n+1}N \colon x) = I^nN$ for $n \gg 0$. We also note that $\ov{N} = N/xN$ is a perfect $A$-module.
We have an exact sequence for $n \geq 1$
\[
 0 \rt \ker \alpha_n \rt N/I^nN \xrightarrow{\alpha_n} N/I^{n+1}N \rt \ov{N}/I^{n+1}\ov{N} \rt 0,
\]
where $\alpha_n(a + I^n) = xa + I^{n+1}$. We note that $\ker \alpha_n  = (I^{n+1}N \colon x)/I^nN = 0$ for $n \gg 0$.  Thus for $n \gg 0$ we have an exact sequence
for $i \geq d+1$
\[
\cdots \rt U^i_{\ov{N}}(M)_n \rt U^i_N(M)_{n} \xrightarrow{\beta^{i}_n} U^i_N(M)_{n-1} \rt \cdots
\]
Here $\beta^{i}_n$ is multiplication by $xt \in \R(I)_1$. As $xt $ is $V^i_N(M)$-filter regular for $i \geq d +1$ (here we are using the fact that $M$ is two periodic) we get that $\beta^{i}_n$ is surjective for $n \gg 0$. In particular we get an exact sequence for $i \geq d +2$
\[
0 \rt U^i_{\ov{N}}(M)_n \rt U^i_N(M)_{n} \xrightarrow{\beta^{i}_n} U^i_N(M)_{n-1} \rt 0.
\]
As $M$ is $2$-periodic the above exact sequence also holds for $i = d+1$.

We will need the following result:
\begin{proposition}
\label{hwm} Let $(A,\m)$ be a hypersurface of dimension $d$ and let $M$ be a non-free MCM $A$-module. Let $W$ be a finite length $A$-module. The following assertions are equivalent:
\begin{enumerate}[\rm (1)]
  \item $\projdim W < \infty$.
  \item $\Tor^A_i(M, W) = 0$ for $i > 0$.
  \item $\Ext_A^i(M, W) = 0$ for $i > 0$.
  \item $\Ext_A^i(W, M) = 0$ for $i > d$.
\end{enumerate}
\end{proposition}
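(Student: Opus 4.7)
The plan is to prove $(1) \Rightarrow (2), (3), (4)$ by direct homological arguments, and then to establish each of the three converses $(i) \Rightarrow (1)$ in turn, each time via a different incarnation of hypersurface rigidity. For $(1) \Rightarrow (2), (3)$, apply Proposition \ref{pdimf} directly. For $(1) \Rightarrow (4)$: since $W$ has finite length, $\depth W = 0$, so Auslander--Buchsbaum forces $\projdim_A W = d$; applying $\Hom_A(-, M)$ to a length-$d$ free resolution of $W$ kills $\Ext^i_A(W, M)$ for $i > d$.

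For $(2) \Rightarrow (1)$, apply the Huneke--Wiegand rigidity theorem \cite[1.9]{HW} (or \cite[1.1]{M}), already invoked throughout this paper. Since $M$ is non-free MCM on a hypersurface, $\projdim_A M = \infty$; the vanishing of $\Tor^A_i(M, W)$ for all $i \geq 1$ (in particular for some $i \gg 0$) forces $\projdim_A W < \infty$.

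For $(3) \Rightarrow (1)$, reduce to the Tor case via Matlis duality. Let $E = E_A(k)$ and $W^\vee = \Hom_A(W, E)$; since $W$ has finite length so does $W^\vee$, and $W^{\vee\vee} \cong W$. Taking a projective resolution $P_\bullet \rt M$ and using the termwise adjunction $\Hom_A(P_i, W) \cong (P_i \otimes_A W^\vee)^\vee$ together with the exactness of $(-)^\vee$ produces a natural isomorphism $\Ext^i_A(M, W) \cong \Tor^A_i(M, W^\vee)^\vee$ for all $i$. So (3) forces $\Tor^A_i(M, W^\vee) = 0$ for $i \geq 1$, and applying $(2) \Rightarrow (1)$ to $W^\vee$ yields $\projdim_A W^\vee < \infty$. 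Since $A$ is Gorenstein, Matlis duality on finite-length modules interchanges projective and injective dimension, and the latter is finite iff the former is (Iwanaga--Gorenstein property), so $\projdim_A W < \infty$.

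For $(4) \Rightarrow (1)$, set $N = \Omega^d_A(W)$. By the depth lemma $N$ is MCM, and dimension-shifting along the first $d$ steps of a free resolution of $W$ yields $\Ext^j_A(N, M) \cong \Ext^{j+d}_A(W, M) = 0$ for all $j \geq 1$. The problem thus reduces to the Ext-rigidity statement: if $M$ and $N$ are non-free MCM on a hypersurface then $\Ext^j_A(N, M) \neq 0$ for some $j \geq 1$. This follows from the Avramov--Buchweitz support-variety theory: over a hypersurface the support variety $V^*(-)$ of any module is either empty (equivalent to finite $\projdim$) or the unique point of $\operatorname{Proj}(k[\chi])$, so eventual vanishing of $\Ext^i(N, M)$ would force $V^*(M) \cap V^*(N) = \emptyset$, contradicting that both modules are non-free. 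Hence $N$ must be free and $\projdim_A W \leq d$. The main obstacle is precisely this last step: while the Tor direction is a direct citation of \cite[1.9]{HW}, the Ext direction requires either appealing to support-variety machinery or to a separate Ext-rigidity result for hypersurfaces derived from the 2-periodicity of MCM complete resolutions.
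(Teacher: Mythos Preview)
Your argument is correct. The implications $(1)\Rightarrow(2),(3),(4)$ and $(2)\Rightarrow(1)$ match the paper exactly (Proposition~\ref{pdimf} and \cite[1.9]{HW}/\cite[1.1]{M}). For $(3)\Rightarrow(1)$ and $(4)\Rightarrow(1)$ the paper simply cites \cite[5.12]{AB} in both cases, whereas you give independent arguments. Your route for $(3)\Rightarrow(1)$ via Matlis duality, reducing to the Tor statement and Huneke--Wiegand, is genuinely different and more self-contained: it avoids the support-variety machinery entirely and only uses that $A$ is Gorenstein plus the already-established $(2)\Rightarrow(1)$. Your route for $(4)\Rightarrow(1)$ via the syzygy $N=\Omega^d(W)$ and the support-variety intersection formula is essentially an unpacking of what \cite[5.12]{AB} provides, so here the difference is expository rather than mathematical. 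The trade-off: the paper's proof is a two-line citation, while yours explains the mechanism, at the cost of invoking the Gorenstein equivalence $\projdim<\infty\Leftrightarrow\operatorname{injdim}<\infty$ and the Matlis-duality bookkeeping in $(3)\Rightarrow(1)$.
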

\begin{proof}
By \ref{pdimf} we have (1) $\implies$ (2), (3). Also the assertion (1) $\implies$ (4) is trivial.

 (2) $\implies$ (1) This follows from \cite[1.9]{HW} (also see \cite[1.1]{M}).

 (3) $\implies$ (1) This follows from \cite[5.12]{AB}.

 (4) $\implies$ (1) This also  follows from \cite[5.12]{AB}.
\end{proof}

We now give a proof of Theorem \ref{m3-gen}. We restate it here for the convenience of the reader.
\begin{theorem}\label{m3-gen-body}
Let $(A,\m)$ be a hypersurface ring of dimension $d$. Let $N$ be a perfect $A$-module and let $I$ be an ideal in $A$ with $\ell(N/IN)$ finite.  Then
 $$ r_{I, N}^0 = s_{I, N}^0 = e_{I, N}^0. $$
\end{theorem}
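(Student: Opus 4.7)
The plan is to induct on $r = \dim N$. As in \ref{bc} (applied uniformly to $t$, $s$, and $e$), I may assume $A$ is complete with infinite residue field. Since Corollary~\ref{m1-gen} and Theorem~\ref{m2-gen} say that the degrees $\deg t_{I,N}(M,-)$, $\deg s_{I,N}(M,-)$, $\deg e_{I,N}(M,-)$ are independent of the choice of non-free MCM module $M$, I fix one such $M$ and it suffices to show the three degrees coincide for this $M$.

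The key preparatory step is a \emph{vanishing dichotomy}: for any non-free MCM module $M$, the three conditions ``$t_{I,N}(M,n)=0$ for $n \gg 0$'', ``$s_{I,N}(M,n)=0$ for $n \gg 0$'', and ``$e_{I,N}(M,n)=0$ for $n \gg 0$'' are equivalent. I would prove it as follows. If $t_{I,N}(M,n)=0$ eventually, then $r_{I,N}^0 = -1$, and Corollary~\ref{m1-gen} applied to $\Omega M$ gives $\Tor^A_2(M, N/I^{n+1}N) = \Tor^A_1(\Omega M, N/I^{n+1}N) = 0$ for $n \gg 0$; two-periodicity of minimal MCM resolutions over a hypersurface then upgrades this to $\Tor^A_i(M, N/I^{n+1}N) = 0$ for every $i \geq 1$ and $n \gg 0$. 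Proposition~\ref{hwm} now forces $\projdim_A N/I^{n+1}N < \infty$ (hence $=d$ by Auslander--Buchsbaum); so \ref{pdimf} yields $\Ext^i_A(M, N/I^{n+1}N) = 0$ for $i \geq 1$, giving $s_{I,N}(M,n) = 0$, while $\projdim_A N/I^{n+1}N \leq d$ forces $\Ext^{d+1}_A(N/I^{n+1}N, M) = 0$, giving $e_{I,N}(M,n) = 0$. The arguments starting from the eventual vanishing of $s_{I,N}$ or $e_{I,N}$ are parallel, using $\Omega M$, respectively $\Omega^{-1} M$, together with Theorem~\ref{m2-gen} for the $e$-case and the remaining equivalences of Proposition~\ref{hwm}.

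With this in hand, the base case $r = 0$ is immediate: $N$ is perfect of finite length, so $I^n N = 0$ for $n \gg 0$ and $\projdim N \leq d$, whence all three functions are eventually zero. For $r = 1$, Corollary~\ref{tor-ext-deg} and Corollary~\ref{ann-e} bound each of the three degrees by $\dim N - 1 = 0$, and the dichotomy forces either all three to be $-1$ or all three to be $0$. For $r \geq 2$, I pick $x \in I$ that is $N \oplus (M\otimes N)$-superficial with respect to $I$ and such that $xt$ is filter-regular on each of the finitely generated $\R(I)$-modules $L_1^N(M), E^1_N(M), E^2_N(M), V^{d+1}_N(M), V^{d+2}_N(M)$; such $x$ exists by \ref{rachel}, \ref{dual}, and the infinite-residue-field hypothesis. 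Setting $\ov{N} = N/xN$, a perfect module of dimension $r-1$, the exact sequences \ref{t}, \ref{e}, and the $i = d+1$ case of the sequence in \ref{usha} give, for $n \gg 0$,
\begin{align*}
t_{I,N}(M,n) - t_{I,N}(M,n-1) &= t_{I,\ov{N}}(M,n), \\
s_{I,N}(M,n) - s_{I,N}(M,n-1) &= s_{I,\ov{N}}(M,n), \\
e_{I,N}(M,n) - e_{I,N}(M,n-1) &= e_{I,\ov{N}}(M,n).
\end{align*}
By induction the three right-hand sides are polynomial-type functions of a common degree $d_0$. If $d_0 \geq 0$, each left-hand function has degree $d_0 + 1$; if $d_0 = -1$, each left-hand function is eventually constant, and the vanishing dichotomy forces the three constants to be zero together (all degrees $-1$) or positive together (all degrees $0$). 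In every case the three degrees agree, completing the induction.

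The main obstacle is the subcase $d_0 = -1$ (and the parallel part of the $r = 1$ base case), where the first-difference identities alone do not distinguish degree $-1$ from degree $0$. This is precisely where Proposition~\ref{hwm}---the Huneke--Wiegand / Avramov--Buchweitz characterization of finite projective dimension of a finite-length module over a hypersurface---and the two-periodicity of MCM resolutions are essential, forcing the three functions to vanish or persist simultaneously.
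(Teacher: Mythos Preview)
Your proposal is correct and follows essentially the same approach as the paper: induct on $\dim N$, handle the degree $-1$ case via Proposition~\ref{hwm} (your ``vanishing dichotomy''), and in the inductive step pass from $N$ to $\ov{N} = N/xN$ for a suitably chosen superficial/filter-regular $x$ using the exact sequences \ref{t}, \ref{e}, and \ref{usha}. The only differences are organizational---you isolate the dichotomy as a standalone lemma and include the trivial case $r=0$, whereas the paper reproves the dichotomy at each stage and begins at $r=1$---but the mathematical content is identical.
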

\begin{proof}
We may assume that $A$ is complete with infinite residue field.
We prove the result by induction on dimension $r$ of $N$.

 Let $M$ be a MCM $A$-module with no free summands.  By Corollary \ref{m1-gen} \\
$\deg t_{I,N}(M, -) = r_{I, N}^0$ and $\deg s_{I,N}(M, -) = s_{I, N}^0$.  By \ref{m2-gen} we have \\$\deg e_{I,N}(M, -) = e_{I, N}^0$.
Recall the degree of the zero polynomial is set as $-1$.

 We first consider the case when $r = 1$.  We note that by \ref{rachel}; $r_{I, N}^0 = 0$ or $ = -1$ and   $s_{I, N}^0 = 0 $ or $ = -1$. We also have $e_{I, N}^0 = 0$ or $=-1$; see \ref{ann-e}.

 We show that  $r_{I, N}^0 = -1$ if and only if $s_{I, N}^0  = -1$  if and only if $e_{I, N}^0 = -1$.

 Suppose $r_{I, N}^0 = -1$.
Then $t_{I,N}(M, -) = 0$ and therefore
 $t_{I, N}(M, n) = 0$ for $n \gg 0$ (say from $n \geq m$). As $t_{I, N}(M, n) = t_{I, N}(\Omega^1_A(M), n)$ for all $n$ and $M$ is $2$-periodic we have $\Tor^A_i(M, A/I^{n +1}) = 0$ for all $i \geq 1$ and for $n \geq m$. It follows from \ref{hwm} that $\projdim_A N/I^{n+1}N < \infty$ for all $n \geq m$. Therefore   $s_{I, N}(M, n) = 0$ and $e_{I,N}(M, -) = 0$ for all $n \geq m$.
So $\deg s_{I,N}(M, -) = -1$ and $\deg e_{I,N}(M, -) = -1$.  So $s_{I, N}^0  = -1$  and $e_{I, N}^0 = -1$.

 The proof of that if $s_{I, N}^0  = -1$ ($e_{I, N}^0 = -1$)  then $e_{I, N}^0 = -1$ and $r_{I, N}^0 = -1$ ($s_{I, N}^0  = -1$ and $r_{I, N}^0 = -1$) respectively is similar and left to the reader.

  We now assume that $r \geq 2$ and the result is proved when $\dim N = r -1$.
A proof similar to above shows that
 $r_{I, N}^0 = -1$ if and only if $s_{I, N}^0  = -1$  if and only if $e_{I, N}^0 = -1$.

So assume  $\deg  t_{I,N}(M, -) \geq 0$ (and so  $\deg s_{I,N}(M, -)  \geq 0 $ and $\deg e_{I,N}(M, -) \geq 0$). Let $x \in I$ be $N \oplus M\otimes N$-superficial and also assume that $xt \in \R_1$ is $L_1^N(M) \oplus E^1_N(M) \oplus E_N^2(M) \oplus V^{d+1}_N(M) \oplus V^{d+1}_N(M)$-filter regular.
We note that   $\ov{N} = N/xN$ is a perfect $A$-module of dimension $r -1$. So it follows from our induction hypotheses that
$$\deg t_{I,\ov{N}}(M, -) = \deg s_{I,\ov{N}}(M, -) = \deg e_{I,\ov{N}}(M, -).$$

As $\deg t_{I,N}(M, -) \geq 0$, it follows from  \ref{t} that $$\deg t_{I,N}(M, -) = \deg t_{I,\ov{N}}(M, -)  +  1.$$ Similarly  as $\deg s_{I,N}(M, -) \geq 0$ it follows from \ref{e} that
$$\deg s_{I,N}(M, -) = \deg s_{I,\ov{N}}(M, -) + 1.$$  Also as $\deg e_{I,N}(M, -) \geq 0$ it follows from  \ref{usha} that
$$\deg e_{I,N}(M, -) = \deg e_{I,\ov{N}}(M, -) + 1.$$
  The result follows.
  \end{proof}


\end{document}